\documentclass[12pt]{amsart}
\usepackage{mathrsfs}
\usepackage{amsthm,amsmath}
\usepackage{xypic}
\usepackage{rotating}
\usepackage{setspace}
\usepackage{pstricks}
\usepackage{graphicx}

\xyoption{all}

\usepackage{textcomp}
\usepackage{amsfonts}
\usepackage{amssymb}
\usepackage{amsthm}
\usepackage{stmaryrd}


\newcommand{\nin}{\not \in}



\newcommand{\reals}{\mathbb{R}}
\newcommand{\naturals}{\mathbb{N}}

\newcommand{\integers}{\mathbb{Z}}



\newcommand{\inv}[1]{{#1}^{-1}}

\newcommand{\kerm}{{\textrm{ ker }}}
\newcommand{\imm}{{\textrm{ im }}}





\newcommand{\autm}{\textrm{Aut}}

\newcommand{\iso}{\cong}
\newcommand{\boundaryinf}{\partial}

\newtheorem*{corollary*}{Corollary}
\newtheorem*{theorem*}{Theorem}
\newtheorem{theorem}{Theorem}[section]
\newtheorem{lemma}[theorem]{Lemma}

\newtheorem{proposition}[theorem]{Proposition}
\newtheorem{corollary}[theorem]{Corollary}
\newtheorem{observation}[theorem]{Observation}

\theoremstyle{definition}
\newtheorem{definition}{Definition}[section]
\newtheorem{example}{Example}[section]
\newtheorem*{example*}{Example}

\theoremstyle{remark}
\newtheorem*{remark}{Remark}



\textwidth=125mm
\textheight=195mm

\title{Connectivity properties for actions on locally finite trees}
\author{
Keith Jones \\ 
Trinity College \\
{\tt keith.jones@trincoll.edu}
}

\begin{document}

\begin{abstract}
Given an action $G \stackrel{\rho}{\curvearrowright} T$ by a finitely
generated group on a locally finite tree, we view points of the visual boundary
$\boundaryinf T$ as directions in $T$ and use $\rho$ to lift
this sense of direction to $G$. For each
point $E \in \boundaryinf T$, this allows us to ask if $G$ is
$(n-1)$-connected ``in the direction of $E$''.  The
invariant $\Sigma^n(\rho)\subseteq \boundaryinf T$ then records the set of
directions in which $G$ is $(n-1)$-connected. In this paper, we introduce a
family of actions for which $\Sigma^1(\rho)$ can be calculated through 
analysis of certain quotient maps between trees. We show that for actions of
this sort, under reasonable hypotheses, $\Sigma^1(\rho)$ consists of
no more than a single point. By strengthening the hypotheses, we
are able to characterize precisely when a given end point lies in
$\Sigma^n(\rho)$ for any $n$. 
\end{abstract}
\maketitle

\section{Introduction}

Let $G$ be a group having type $F_n$,\footnote{By definition, $G$ has type $F_n$
iff there exists a $K(G,1)$-complex having finite $n$-skeleton. This
is equivalent to saying that there is an $n$-dimensional
$(n-1)$-connected CW-complex on which $G$ acts freely and
cocompactly by permuting cells. All groups have type $F_0$, while type $F_1$ is
equivalent to finitely generated and type $F_2$ is equivalent to
finitely presented \cite[\S7.2]{geogheganbook}.}
 and let $M$ be a proper CAT(0)
metric space.\footnote{A CAT(0) space is a geodesic metric space whose geodesic
triangles are no fatter than the corresponding ``comparison
triangles'' in the Euclidean plane, and a metric space is proper if every closed 
ball is compact \cite[Ch. II.1]{bridsonhaefliger}.} Let $\rho: G \rightarrow Isom(M)$ be an action by
isometries.
In \cite{amsmemoir}, Bieri and Geoghegan introduced a collection of geometric
``$\Sigma$-invariants'', $\Sigma^n(\rho)$, $n \geq 0$.  These arise naturally from
the study of the Bieri-Neumann-Strebel-Renz (BNSR) invariants $\Sigma^n(G)$,
which can then be viewed as a special case.  These invariants provide topological
insight into $\rho$ and provide algebraic
information about $G$.  In particular, if $\rho$ has
discrete orbits and $G$ is
finitely generated, then $\Sigma^1(\rho) = \boundaryinf M$ iff the point stabilizers
under $\rho$ are finitely generated; more generally, if $G$ has type
$F_n$, then $\Sigma^n(\rho) = \boundaryinf M$ iff the point stabilizers under $\rho$
have type $F_n$.\footnote{See Theorem A and the
Boundary Criterion in \cite{amsmemoir}, and note that the required
condition ``almost geodesically complete'' is ensured by cocompactness
due to Ontaneda \cite[Theorem B]{ontanedacocompact}.}
 
The invariant $\Sigma^n(\rho)$ depends on a notion of ``controlled
connectivity'', which we will briefly describe here.\footnote{The technical
 definition of controlled connectivity is provided in
\S\ref{controlledconnectivitysection}.}
The action $\rho$ can be used to impose a sense of direction
on $G$ as follows.  The space $M$ has a CAT(0) boundary $\boundaryinf
M$, which is in one to one correspondence with the collection of
geodesic rays emanating from any particular point of $M$.  In this
way, $\boundaryinf M$ encompasses the set of directions in $M$ in which one
can ``go to infinity''.  
For an end point $E \in \boundaryinf M$ there is a nested sequence of
subsets of $M$ (called horoballs about $E$). This nested sequence
provides a filtration of $M$.  Because $G$ has type $F_n$, there is an
$n$-dimensional $(n-1)$-connected CW-complex $X$ on which $G$ acts
freely and cocompactly by permuting cells.  One can then choose a
$G$-equivariant ``control'' map $h: X \rightarrow M$. Fixing an $E \in \boundaryinf M$,
$h$ allows us to lift the sense of direction from $M$ up to $X$ (and therefore $G$ by
proxy) by taking the preimages of horoballs about $E$.  If, roughly speaking,
 the preimages of the horoballs about $E$ are $(n-1)$-connected, 
the action $\rho$ is said to be {\em
controlled $(n-1)$-connected} or $CC^{n-1}$ over $E$.\footnote{For $n =0$, we
 take $(-1)$-connected to mean non-empty.} 
The precise definition
ensures that this is independent of choice of $X$ or $h$, and is in
fact a property of $\rho$  \cite[\S 3.2]{amsmemoir}.

For $n \geq 0$, the invariant $\Sigma^n(\rho)$ consists of
all those end points over which $\rho$ is $CC^{n-1}$. These form a
nested family  
\[
\Sigma^0(\rho) \supseteq \Sigma^1(\rho) \supseteq \Sigma^2(\rho) \dots
. \]
The action $\rho$ induces a topological action by $G$ on $\boundaryinf M$, under
which $\Sigma^n(\rho)$ is invariant. Those familiar with the BNSR
invariant $\Sigma^n(G)$ may recall that the BNSR invariant is an open
subset of the boundary, which in their case is a sphere. It is worth pointing out 
that the Bieri-Geoghegan
invariant $\Sigma^n(\rho)$ is in general not open in $\boundaryinf M$.

Bieri and Geoghegan have calculated $\Sigma^n$ for the modular group acting on
the hyperbolic plane in \cite{bierigeoghegansl2} and provide information
about $\Sigma^n$ for actions on trees by metabelian groups of finite
Pr\"ufer rank in \cite{amsmemoir}, Example C in Chapter 10. In his PhD thesis \cite{rehn}, 
Rehn provides 
calculations for the natural action by $SL_n(\integers[\frac{1}{k}])$ on the symmetric
space for $SL_n(\reals)$.  

In the case where $M = T$, a locally finite simplicial tree, calculations in 
\cite{amsmemoir} led Bieri and Geoghegan to ask whether
$\Sigma^1(\rho)$ would always be either empty, a singleton, or the entire
boundary of the tree. The ``entire boundary'' case has been discussed
above. In his Frankfurt Diploma Thesis \cite{lehnert}, Lehnert gave an 
example for which this is not the case. However, in this paper we illustrate 
that there does exist a class of actions for which
$\Sigma^n$ is either empty or a singleton. 

This paper is a development of part of the author's Ph.D dissertation at SUNY
Binghamton. The author is grateful to his Ph.D advisor Ross Geoghegan for his constant support and
encouragement.  Additionally, the clarity and elegance of this paper
have benefitted significantly from the suggestions of anonymous referees,
whose time and effort are greatly appreciated.

\subsection{Main Result}
All trees are assumed to be simplicial trees viewed as CAT(0) metric spaces
by giving each edge a length of 1. All actions under consideration are
by simplicial automorphisms and therefore are by isometries. Furthermore, we assume
that actions are without
inversions --- i.e, an edge is stabilized if and only if it is fixed
pointwise --- since we can simply pass to the barycentric subdivision
otherwise. Any tree exhibiting such an action by a group $G$ will be
referred to as a $G$-tree. All $G$-trees are assumed to be
infinite, and $G$ is always assumed to be finitely generated.

A group action on a tree is {\em minimal} if there exists no proper
invariant subtree. A cocompact action on an infinite tree is minimal if and only if the
tree has no leaves.  
We define a {\em morphism of trees} to mean a map between two trees
which sends vertices to vertices, edges to edges, and preserves
adjacency. All maps between $G$-trees are 
assumed to be $G$-equivariant morphisms of trees, and therefore continuous. 
The {\em star} of a vertex is the set of edges adjacent to
that vertex, and a morphism is {\em locally surjective} (resp.
{\em locally injective}) if for each vertex of the domain tree, the
corresponding map between stars is surjective (resp. injective).  
See \cite{basscoveringtheory} for further discussion. In the context of
morphisms of trees (as opposed to graphs), local injectivity is equivalent to injectivity, 
and local surjectivity implies surjectivity. A tree is {\em locally
finite} if the star of each vertex is finite; such trees are proper
metric spaces.

\begin{theorem}[Main Theorem]
\label{mainthm}
Let $G$ be a finitely generated group, $T$ a locally finite 
tree, and $G \stackrel{\rho}{\curvearrowright} T$ a cocompact action
by isometries. If there exists a minimal $G$-tree $\tilde T$ and a
$G$-morphism $q: \tilde T \rightarrow T$ which is
locally surjective, but not locally injective, then $\Sigma^1(\rho)$
consists of at most a single point of $\boundaryinf T$.
\end{theorem}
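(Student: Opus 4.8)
The plan is to use the morphism $q$ to convert the controlled-connectivity question about $\rho$ into a concrete connectivity question about preimages of horoballs in the auxiliary tree $\tilde T$, and then to exploit the fold in $q$ to destroy that connectivity in all directions but one.

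First I would fix a free cocompact connected model $X$ for $G$ (a Cayley graph) and, using that $\tilde T$ is a tree and hence contractible, build a $G$-equivariant map $\phi\colon X \to \tilde T$; composing with $q$ yields a control map $h = q\circ\phi\colon X \to T$. Since $h^{-1}(B) = \phi^{-1}(q^{-1}(B))$ for every $B \subseteq T$, and since $\phi$ is a map between cocompact $G$-spaces with $\tilde T$ already $1$-connected, the definition of $CC^0$ together with its independence of the chosen control map reduces the problem to the following: for $E \in \partial T$, membership $E \in \Sigma^1(\rho)$ forces the preimages $q^{-1}(HB)$ of the horoballs $HB$ about $E$ to be \emph{essentially connected} in $\tilde T$ (every component of a deep preimage maps into a single component of a shallower one, with all preimages nonempty). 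I will use the contrapositive.

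Next I would read off disconnection from the fold. The failure of local injectivity provides a vertex $\tilde v_0$ and distinct edges $\tilde e_1 \neq \tilde e_2$ at $\tilde v_0$ with $q(\tilde e_1) = q(\tilde e_2) = e$; writing $v_0 = q(\tilde v_0)$, letting $v_1$ be the far endpoint of $e$ and $\tilde v_1 \neq \tilde v_2$ the far endpoints of $\tilde e_1,\tilde e_2$ (so $q(\tilde v_i)=v_1$), the unique arc of the tree $\tilde T$ from $\tilde v_1$ to $\tilde v_2$ runs through $\tilde v_0$. The governing observation is that a horoball $HB$ about $E$ is a subtree of $T$, so from any vertex $w \notin HB$ all of $HB$ is reached through a single ``gate'' edge; hence a fold seated at such a $w$ whose image edge is exactly this gate edge separates two lifts of the gate vertex inside $q^{-1}(HB)$. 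Orienting $e$ from $v_0$ toward $v_1$, the canonical fold does this precisely for ends $E$ lying beyond $e$. I would then translate: by $G$-equivariance each $ge$ is a fold, and by cocompactness the oriented edges $\{ge : g\in G\}$ recur arbitrarily far out, so whenever translates $g_n e$ point toward $E$ at unbounded depth, the required healing radius is unbounded, no control function exists, and $E \notin \Sigma^1(\rho)$.

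It remains to show that the surviving set $S = \{E : \text{no translate of } e \text{ points toward } E \text{ cofinally}\}$, which is closed and $G$-invariant, has at most one point; this is the \emph{main obstacle}. The clean half is immediate: if $E \neq E'$ both survive and the orbit $G\cdot e$ meets the bi-infinite line $L$ joining them, that edge is oriented toward exactly one of $E, E'$ and excludes it, a contradiction. The real work is to force such an edge to exist. Here I would invoke minimality of $\tilde T$ together with cocompactness and the north--south dynamics of a hyperbolic $g \in G$: its axis-ends are dense in $\partial T$ and its powers sweep the fold across every line, so I expect to upgrade ``$S$ has two points'' first to ``$S$ contains the ends of a genuine axis'' and then to the presence of a fold lying on, or pointing into, the connecting line. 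The one end that can escape is the coherent, fold-free direction --- necessarily a $G$-invariant end --- which is exactly why $\Sigma^1(\rho)$ may still be a single point rather than empty.
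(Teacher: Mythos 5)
Your overall architecture is the same as the paper's --- (i) a fold facing $E$ excludes $E$ from $\Sigma^1(\rho)$, and (ii) at most one end of $T$ can avoid being faced by a fold --- but both halves have genuine gaps as written. In half (i), the pair of points you exhibit (the two lifts $\tilde v_1,\tilde v_2$ of the gate vertex of $HB$) sits \emph{at} the fold, so as you push the fold out to defeat a lag $\lambda$ the pair moves with it: it lies in $X_{(\tau,-\lambda+1)}$ but not in any fixed $X_{(\tau,r)}$. The definition of $CC^0$ fixes $r$ first and then quantifies over lags, so these pairs do not negate it. What is missing is precisely the role of local surjectivity: one must lift the entire geodesic ray from $v_1$ toward $E$ through \emph{both} edges $\tilde e_1$ and $\tilde e_2$ of the fold (this is the paper's Observation on lifting rays), producing two rays $\tilde\gamma_1,\tilde\gamma_2$ meeting only at $\tilde v_0$ and each entering $q^{-1}(HB_0(\tau))$; the points $\tilde y_i\in\tilde\gamma_i\cap q^{-1}(HB_0)$ then lie in the fixed level $X_{(\tau,0)}$ for every $\lambda$, while the unique arc joining them in the tree $\tilde T$ passes through $\tilde v_0$, which lies outside $q^{-1}(HB_{-\lambda})$. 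Your write-up never actually invokes local surjectivity, which is a warning sign, since without it one branch of the fold could be a dead end and the disconnection would be invisible to deep filtration levels.

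In half (ii), you correctly identify the hard point but only gesture at it: ``I expect to upgrade \dots'' via north--south dynamics and density of axis ends is not a proof, and it is not clear it can be made one (density of hyperbolic fixed points needs the action on $T$ to be non-elementary, whereas the theorem must also cover, e.g., the case where $T$ is close to a line; moreover your dynamics takes place in $T$ while the fold and the minimality hypothesis live in $\tilde T$). The paper's mechanism is much more elementary and is the real content of the minimality hypothesis: the set $\tilde{\mathcal E}$ of oriented edges of $\tilde T$ belonging to a collapsing pair is nonempty and $G$-invariant, and the set of vertices of $\tilde T$ from which \emph{every} edge of $\tilde{\mathcal E}$ points away spans a proper $G$-invariant subtree, hence is empty by minimality. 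Thus every vertex of $\tilde T$ has a fold-edge pointing at it; projecting to $T$ (and taking the innermost point of backtracking when $q$ folds the connecting path) shows every vertex of $T$ is faced by a collapsing pair, and two unfaced ends would force an unfaced vertex on the line between them. You should also note that your ``clean half'' needs the paper's Lemma \ref{onepairmanypairslemma} (a hyperbolic element translating one fold to infinitely many facing the same end) to pass from ``one translate of $e$ on the line $L$ points toward $E$'' to the cofinal recurrence your set $S$ requires.
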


We do not require $\tilde T$ to be locally finite, as it
is irrelevant to us whether or not $\tilde T$ is proper as a metric
space. Also, it is worth noting that the map $q: \tilde T \rightarrow
T$ does not generally extend to a map $\boundaryinf \tilde T
\rightarrow   \boundaryinf T$, as geodesic rays may be collapsed to finite paths by $q$.

As mentioned in the introduction, $\Sigma^1(\rho)$ is a
$G$-invariant subset of $\boundaryinf T$. Hence, if the conditions of
the Main Theorem apply and there does exist a point $E_0 \in
\Sigma^1(\rho)$, then $E_0$ is necessarily fixed by $\rho$. In some
cases, this allows us to easily determine that $\Sigma^1(\rho)$ is
empty, as in the following examples. 

\begin{figure}
\includegraphics[scale=.75]{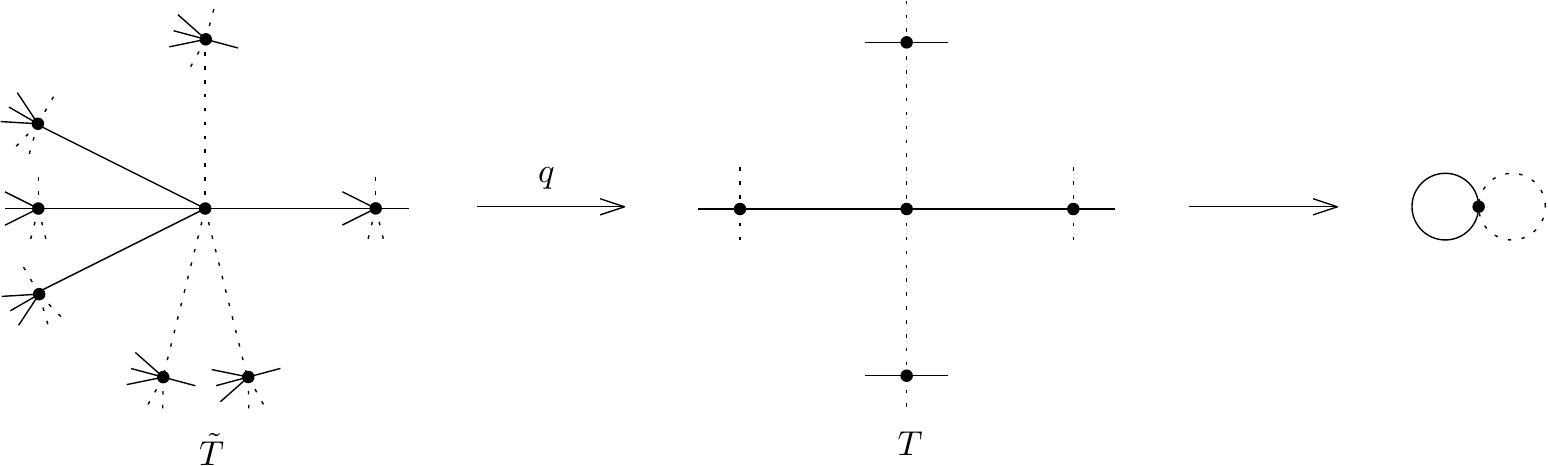}
\caption{$G$ admits a normal subgroup $N$, whose action on $\tilde T$
collapses $\tilde T$ to $T$.}
\label{2roseexfigure}
\end{figure}

\begin{example}
\label{2roseexample}
Let $G$ be the group given by the presentation 
\[ G = \langle a,s,t| a^s = a^2, a^t = a^3 \rangle.\]
As is clear from the given presentation, $G$ can be realized as
a fundamental group of a graph of groups, where the graph is a 2-rose
(a single vertex with two loops). The Bass-Serre tree $\tilde T$ associated 
with this graph of groups decomposition is a regular 7-valent tree.
Let $N$ be the normal closure of $a$. Then $N$ consists of all elements of $G$ which
stabilize a vertex in $\tilde T$. The quotient group $G/N$ is free
on two generators and acts freely on $T = N \backslash \tilde T$ with quotient a
2-rose of circles, whereby $T$ is a regular 4-valent tree. 
Figure \ref{2roseexfigure} demonstrates the collapsing on a neighborhood of a
vertex in $\tilde T$. (One can take $T$ to be the Cayley graph of $G/N$.) The
natural quotient map $\tilde T \rightarrow T$ satisfies 
the conditions of the Main Theorem, and no end point $E \in \boundaryinf T$ is
fixed by $\rho$.  Hence $\Sigma^1(\rho) = \emptyset$.

This example can be generalized to any non-free group with a graph of
groups decomposition over a graph containing a single vertex. Such a
group always has a free quotient obtained by collapsing the normal
closure of the subgroup
associated with the vertex, and as above, the Cayley graph of this
free group can be viewed as the quotient of the original Bass-Serre
tree.
\end{example}

\begin{example}
\label{lehnertexample}
One of Lehnert's counterexamples to the question of whether
$\Sigma^1$ must be either $\emptyset$, a singleton, or $\boundaryinf
T$ in the case of simplicial trees is closely related to
the group $G$ discussed in Example \ref{2roseexample}: Let 
$H = \integers[\frac{1}{6}] \rtimes F_2(x,y)$, where
$F_2(x,y)$ is a free group generated by the letters $x$ and $y$.
One obtains $H$ from $G$ by adding relations corresponding to the commutator subgroup
of $N$.  The semidirect product structure is given by $t^x = \frac{t}{2}$
and $t^y =\frac{t}{3}$ for $t \in \integers[\frac{1}{6}]$. This group acts on
the same tree $T$, by viewing it as the Cayley graph of its factor $F_2(x,y)$, and
one can represent points in $\boundaryinf T$ by infinite reduced words
in $F_2(x,y)$. Any point represented by an infinite word eventually
consisting of only $x$ or only $y$ will not lie in
$\Sigma^1$ \cite{lehnert}. This fact is a consequence of the
interplay between the actions by $F_2(x,y)$ on
$\integers[\frac{1}{6}]$ and on $T$. The author has a 
proof of this result in a paper currently in preparation, which is 
based on the ``topological construction
of the Bass-Serre tree'' \cite{scottandwall} \cite[Ch.6]{geogheganbook} and 
is distinct in flavor from the both the contents
of this paper and the proof in \cite{lehnert}.

Evidently, for the action $H \curvearrowright T$, there exists
no $\tilde T$ and $q: \tilde T \rightarrow
T$ as described in Theorem \ref{mainthm}. 
\end{example}

\begin{example} 
\label{freeproductexample}
Here is an example where $\tilde T$ is not locally finite.
Let $K_4 = \integers_2 \oplus \integers_2$ be the Klein
4-group, and $D_\infty = \integers_2 * \integers_2$ the infinite
dihedral group. Consider the quotient map $\pi:  D_\infty * D_\infty \twoheadrightarrow
K_4 * K_4$, induced by performing the abelianization map $D_\infty
\twoheadrightarrow K_4$ on each free factor of $D_\infty * D_\infty$.
There is an action $\tilde \rho: D_\infty * D_\infty \rightarrow
\autm(\tilde T)$, where $\tilde T$ (a regular $\infty$-valent tree) is the
Bass-Serre tree corresponding to the given free product decomposition.
There is also an action $\rho: D_\infty * D_\infty
\rightarrow \autm(T)$, where $T$, a regular 4-valent tree,
is the Bass-Serre tree for $K_4 * K_4$; this action factors through
$\pi$. We can realize $T$ as a quotient of $\tilde T$ satisfying 
the conditions of the Main Theorem. Again, because no 
point of $\boundaryinf T$ is fixed by $\rho$, it follows that
$\Sigma^1(\rho)$ is empty. This example is of a kind initially
pointed out to the author by Mike Mihalik. 

This, too, can be generalized: if $A_1$ and $A_2$ are two finitely
generated infinite groups, which admit finite quotients $Q_1$ and $Q_2$, respectively, 
then $G = A_1 * A_2$ admits a quotient map $\pi: G \rightarrow Q_1 * Q_2$.
While $G$ acts on the Bass-Serre tree $\tilde T$ corresponding to the
decomposition $A_1 * A_2$, it also acts on $\kerm \pi \backslash
\tilde T$, which is isomorphic to the Bass-Serre tree corresponding to
$Q_1 * Q_2$.
\end{example}

\begin{example}
More generally, there is a notion of a {\em morphism of graphs of
groups} (essentially, a morphism of graphs together with a collection
of homomorphisms of vertex and edge groups that ensure certain squares
commute), which lifts to an equivariant morphism between the
corresponding Bass-Serre trees (Proposition 2.4 of
\cite{basscoveringtheory}), and one can determine whether the lift
will be locally surjective and not locally injective (Corollary 2.5 of
\cite{basscoveringtheory}). 
This can be used to produce maps satisfying the
conditions of Theorem \ref{mainthm}. For example, consider the
Baumslag-Solitar groups $BS(m,n) = \langle a, t \mid ta^m\inv{t} = a^n\rangle$. 
There is a projection map $BS(2,4) \twoheadrightarrow BS(1,2)$
obtained by adding the relation $ta\inv ta^{-2}$. One can show that
this corresponds to a morphism of graphs of groups which lifts to a
map between the corresponding Bass-Serre trees and has the desired
properties.
\end{example}

Applying Theorems $A$ and $H$ of \cite{amsmemoir}, we have:

\begin{corollary}
If $G \stackrel{\rho}{\curvearrowright} T$ satisfies the conditions of
the Main Theorem, then for any point $z \in T$, the stabilizer $G_z$ of $z$ under the action $\rho$
is not finitely generated. {\raggedright\qed}
\end{corollary}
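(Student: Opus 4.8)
The plan is to combine the Main Theorem with the Boundary Criterion of \cite{amsmemoir} recorded in the introduction, the point being that $\boundaryinf T$ is simply too large to be exhausted by $\Sigma^1(\rho)$. The first step is therefore to verify that $\boundaryinf T$ contains more than one point. I would argue that a cocompact action of a finitely generated group on an infinite, locally finite tree cannot take place on a one-ended tree: a one-ended tree contains no bi-infinite geodesic, so $\rho$ would have no hyperbolic element and every element would be elliptic; a finitely generated group all of whose elements act elliptically either fixes a vertex---impossible here, since a global fixed point together with cocompactness would force $T$ to be finite---or fixes the unique end, acting with translation length zero and hence preserving a Busemann function toward that end. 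Such a $G$-invariant, unbounded $\integers$-valued function would descend to the finite quotient $\rho\backslash T$, which is absurd. Thus $T$ has at least two ends and $\vert \boundaryinf T\vert \geq 2$.

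Second, I would invoke the Main Theorem: $\Sigma^1(\rho)$ contains at most one point of $\boundaryinf T$, so $\Sigma^1(\rho) \neq \boundaryinf T$. Since $\rho$ acts by simplicial automorphisms of a simplicial tree it has discrete orbits, and $G$ is finitely generated; the Boundary Criterion then applies and yields the equivalence that $\Sigma^1(\rho) = \boundaryinf T$ if and only if all point stabilizers are finitely generated. Combining this with $\Sigma^1(\rho) \neq \boundaryinf T$ shows that the point stabilizers are not all finitely generated.

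The delicate point is to strengthen this conclusion---which, taken alone, asserts only that some stabilizer fails to be finitely generated---to the uniform statement that $G_z$ is non-finitely-generated for every $z \in T$. Here I would first use cocompactness to reduce the problem: there are only finitely many $G$-orbits of cells, and point stabilizers lying in a common orbit are conjugate and therefore share all finiteness properties, so it suffices to rule out any single orbit whose stabilizer is finitely generated. To do this I would appeal to the sharper, direction-by-direction content of Theorems A and H of \cite{amsmemoir} rather than to its ``all-or-nothing'' corollary: membership $E \in \Sigma^1(\rho)$ is controlled by the finiteness of the stabilizers met along rays running toward $E$, so a single orbit of finitely generated stabilizers would already place an entire open family of directions into $\Sigma^1(\rho)$, which is incompatible with the bound of at most one point supplied by the Main Theorem.

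I expect this last promotion---from the existence of one non-finitely-generated stabilizer to the assertion for every point $z$, including edge-interior points whose stabilizers are the edge groups and are only a priori subgroups of the adjacent vertex stabilizers---to be the main obstacle, and the step where the full per-direction strength of \cite{amsmemoir} (beyond the equivalence quoted in the introduction) must genuinely be used.
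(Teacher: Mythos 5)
Your first two paragraphs are sound and match the intended argument: $\vert\boundaryinf T\vert\geq 2$, the Main Theorem gives $\Sigma^1(\rho)\neq\boundaryinf T$, and the results of \cite{amsmemoir} must then be brought to bear on the stabilizers. The gap is in your third step. The mechanism you propose there --- that ``membership $E\in\Sigma^1(\rho)$ is controlled by the finiteness of the stabilizers met along rays running toward $E$,'' so that a single orbit of finitely generated stabilizers would place ``an entire open family of directions'' into $\Sigma^1(\rho)$ --- is not a statement of Theorems A or H, and the openness claim in particular sits badly with the paper's explicit remark that $\Sigma^n(\rho)$ is in general \emph{not} open in $\boundaryinf M$. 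What Theorems A and H actually provide is an all-or-nothing statement over points of $M$ itself, and that is precisely the tool you need: for a cocompact action, $\rho$ is $CC^{0}$ over \emph{some} point of $M$ if and only if it is $CC^{0}$ over \emph{every} point of $M$ (Theorem A), and since the orbits here are discrete, being $CC^{0}$ over $z$ is equivalent to $G_z$ being finitely generated; the Boundary Criterion (Theorem H) then converts ``$CC^{0}$ over every point of $M$'' into $\Sigma^1(\rho)=\boundaryinf T$. Thus if even one $G_z$ were finitely generated you would conclude $\Sigma^1(\rho)=\boundaryinf T$, contradicting $\vert\Sigma^1(\rho)\vert\leq 1<\vert\boundaryinf T\vert$. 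The implication runs from one good stabilizer to the whole boundary, so no separate ``promotion'' step is required.

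If you prefer not to lean on the ``some point iff every point'' clause of Theorem A, there is an elementary substitute for the reduction you attempt via conjugacy: since $T$ is locally finite and the action is cocompact, each edge stabilizer has finite index in the stabilizers of both endpoints of that edge, so any two point stabilizers in $T$ are commensurable and therefore are finitely generated or not simultaneously. With that observation, the weak conclusion you already reach in your second paragraph (``not all stabilizers are finitely generated'') upgrades immediately to ``no stabilizer is finitely generated.''
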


\subsection{Collapsing Pairs} Recall that, in the language of
\cite{serretrees}, Chapter I.2, each geometric edge of $T$ corresponds
to two oriented edges, one pointing in either direction.
\begin{remark} We will use the lowercase $e$ to refer to edges of $T$,
oriented or not,  and the 
uppercase $E$ to refer to points of $\boundaryinf T$.
\end{remark}

\begin{definition}
Under the hypotheses of the Main Theorem, let
$(\tilde e_1, \tilde e_2)$ be a pair of
adjacent distinct oriented edges in $\tilde T$ with common initial vertex $\tilde v$.
If $q(\tilde e_1) = q(\tilde e_2)$,
we call this a {\em collapsing pair (of edges)}
under $q$.  Let $e = q(\tilde e_1)$  be the
resulting oriented edge in $T$.
For a vertex $w \in T$ (or end point $E \in \boundaryinf T$), we say the pair
$(\tilde e_1, \tilde e_2)$ {\em faces} $w$ (resp., $E$) if $e$ points
toward $w$ (resp., $E$). This is the same as saying the geodesic from
$q(\tilde v)$ to $w$ (resp., $E$) passes through $e$.
\end{definition}


The proof of the Main Theorem will follow from two facts:
Propositon \ref{onlyoneendlemma} states that 
because $q$ is not locally injective, all end points of $T$ (with the possible exception of a
single end point) are faced by a collapsing pair. Proposition
\ref{pairfacesendlemma} states that local
surjectivity of $q$ forces any end point of $T$ faced by a collapsing
pair to lie outside $\Sigma^1(\rho)$. 

\subsection{The case where stabilizers on $\tilde T$ have type $F_n$}
If we add the condition that the stabilizers under $\tilde \rho$ have
type $F_n$, then we can prove that 
a point $E \in \boundaryinf T$ which is {\em not} faced by a
collapsing pair lies in $\Sigma^n(\rho)$.

\begin{theorem}
\label{theorem2}
Assume the conditions of the Main
Theorem. Furthermore, suppose that for $n > 0$,
$G$ has type $F_n$ and for each point $\tilde z$ of $\tilde T$, the stabilizer 
$G_{\tilde z}$ has type $F_n$.
Then $E \in \boundaryinf T$ lies in $\Sigma^n(\rho)$ if and only if
there is no collapsing pair facing $E$.
\end{theorem}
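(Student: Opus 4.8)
The plan is to prove the two implications separately, and the forward direction is immediate from the machinery already in place. Indeed, suppose some collapsing pair faces $E$. By Proposition \ref{pairfacesendlemma}, local surjectivity of $q$ then forces $E \notin \Sigma^1(\rho)$, and since the invariants are nested, $\Sigma^n(\rho) \subseteq \Sigma^1(\rho)$, we conclude $E \notin \Sigma^n(\rho)$. Contrapositively, $E \in \Sigma^n(\rho)$ implies that no collapsing pair faces $E$. All of the new content therefore lies in the reverse implication: assuming no collapsing pair faces $E$, I must show $\rho$ is $CC^{n-1}$ over $E$, and this is where the hypothesis that each $G_{\tilde z}$ has type $F_n$ is used.

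First I would record the key geometric consequence of the hypothesis. Let $\beta_E \colon T \to \reals$ be the Busemann function associated to $E$ (normalized to increase toward $E$), and set $\tilde\beta = \beta_E \circ q$ on $\tilde T$. Along each edge of $T$ the value of $\beta_E$ changes by exactly $\pm 1$, so the same holds for $\tilde\beta$ along each edge of $\tilde T$; call an edge at a vertex $\tilde v$ \emph{ascending} if it increases $\tilde\beta$. An ascending edge at $\tilde v$ is precisely one mapping to the $E$-ward edge at $q(\tilde v)$; local surjectivity of $q$ guarantees at least one such edge, while ``no collapsing pair faces $E$'' guarantees at most one. Hence every vertex of $\tilde T$ has a \emph{unique} ascending edge. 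I claim this forces each combinatorial horoball preimage $\tilde H_t := q^{-1}(\{\beta_E \ge t\}) = \{\tilde\beta \ge t\}$ to be a connected subtree, hence contractible: from any vertex of $\tilde H_t$ the unique ascending ray stays in $\tilde H_t$, and were the ascending rays of two vertices to converge to distinct ends of $\tilde T$, the geodesic between those ends would attain a minimum of $\tilde\beta$ at a vertex possessing two ascending edges, contradicting uniqueness. Thus all ascending rays converge to a single end $\tilde E$ of $\tilde T$, and any two vertices of $\tilde H_t$ are joined within $\tilde H_t$ through their common ascending ray.

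With the combinatorial picture in hand, I would invoke the standard control-map machinery for cocompact actions with type $F_n$ point stabilizers. Because $\tilde T$ is contractible and $G$ acts cocompactly on it with each $G_{\tilde z}$ of type $F_n$ (and $G$ itself of type $F_n$), one assembles from equivariant $K(G_{\tilde z},1)$-models with finite $n$-skeleta a free, cocompact, $(n-1)$-connected $G$-CW-complex $X$ together with a $G$-map $p \colon X \to \tilde T$ whose preimages of cells (and of stars of cells) are $(n-1)$-connected. Setting $h = q \circ p \colon X \to T$ gives an admissible control map, whose associated superlevel filtration is $Y_t = h^{-1}(\{\beta_E \ge t\}) = p^{-1}(\tilde H_t)$. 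Since $\tilde H_t$ is a contractible subtree and the $p$-fibers over its cells are $(n-1)$-connected, a Mayer--Vietoris / nerve argument shows each $Y_t$ is $(n-1)$-connected; as the $Y_t$ are nested with $(n-1)$-connected terms, every inclusion $Y_t \hookrightarrow Y_{t'}$ (for $t' \le t$) is trivial on $\tilde\pi_k$ for $k \le n-1$. This is exactly essential $(n-1)$-connectivity of the filtration, so $\rho$ is $CC^{n-1}$ over $E$ and $E \in \Sigma^n(\rho)$.

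I expect the main obstacle to be the transfer step in the previous paragraph: proving that the combinatorial connectivity of $\tilde H_t$ inside $\tilde T$ upgrades to genuine $(n-1)$-connectivity of $Y_t$ inside $X$. This is precisely where type $F_n$ on the stabilizers is indispensable---without it the fiber directions of $p$ over a vertex need not be $(n-1)$-connected cocompactly, and $Y_t$ could fail to be $(n-1)$-connected even though $\tilde H_t$ is contractible. Making the assembly of $X$ and the control map $h$ precise, and verifying that the resulting filtration meets the technical definition of $CC^{n-1}$ from \S\ref{controlledconnectivitysection} (including the uniformity supplied by cocompactness), is the portion demanding the most care; by contrast, the lemma on unique ascending edges is the clean conceptual core that renders the horoball preimages contractible.
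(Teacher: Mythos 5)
Your proposal is correct and follows essentially the same route as the paper: the forward implication via Proposition \ref{pairfacesendlemma} together with the nesting $\Sigma^n(\rho) \subseteq \Sigma^1(\rho)$, and the reverse implication via the Scott--Wall control map built from the graph-of-groups decomposition over $\tilde T$, reducing everything to the connectivity of $\inv{q}(HB_r(\tau))$. Your ``unique ascending edge'' argument is just a repackaging of the paper's lemma that $\inv{q}(E)$ is a singleton if and only if no collapsing pair faces $E$, and the merging of ascending rays at a common end of $\tilde T$ is exactly how the paper concludes.
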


\begin{corollary} Let the group $H$ have type $F_n$, and let 
$\varphi: H \rightarrow H$ be injective, so that 
$G = \langle H, t \mid a^t = \varphi(a)\ \forall\ a \in H \rangle$ is an ascending 
HNN-extension.  If $\chi: G \twoheadrightarrow \integers$ maps $t \mapsto 1$
and $\langle \langle H \rangle \rangle \mapsto 0$, then $\chi$ represents
a point in $\Sigma^n(G)$. {\raggedright \qed}
\end{corollary}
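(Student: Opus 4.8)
The plan is to realize the character $\chi$ through a morphism of $G$-trees and then invoke Theorem~\ref{theorem2}. Let $T = \reals$, given its usual simplicial structure with vertex set $\integers$, and let $G$ act on $T$ via $\chi$, so that each $g$ acts as translation by $\chi(g)$. Since $\chi$ surjects onto $\integers$, this action $\rho$ is cocompact (the quotient is a single loop), is without inversions, and $T$ is locally finite. Let $\tilde T$ be the Bass--Serre tree of the ascending HNN-decomposition $G = \langle H, t \mid a^t = \varphi(a)\rangle$, on which $G$ acts by $\tilde\rho$; this is a minimal $G$-tree. Because the edge group of the decomposition is all of $H$, the $G$-equivariant ``height'' function sending a vertex $gH$ to $\chi(g) \in \integers$ determines a $G$-morphism $q : \tilde T \to T$ collapsing $\tilde T$ onto $\reals$ level by level.

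The next step is to check that $q$ satisfies the hypotheses of the Main Theorem together with the extra finiteness hypotheses of Theorem~\ref{theorem2}. A direct count of cosets shows that each vertex $gH$ of $\tilde T$ has exactly one edge ascending to height $\chi(g)+1$ and exactly $[H:\varphi(H)]$ edges descending to height $\chi(g)-1$. Hence the star map induced by $q$ is always surjective, so $q$ is locally surjective, and it fails to be injective precisely when $[H:\varphi(H)] \ge 2$, that is, when $\varphi$ is not onto. For the finiteness hypotheses: every vertex and edge stabilizer of $\tilde T$ is a conjugate of $H$ and so has type $F_n$, and $G$, as the fundamental group of a finite graph of groups all of whose vertex and edge groups equal $H$, has type $F_n$ as well. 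Thus, provided $\varphi$ is not surjective, the pair $(\tilde T, q)$ witnesses the hypotheses of Theorem~\ref{theorem2}.

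It then remains to identify which end of $T$ corresponds to $\chi$ and to locate the collapsing pairs. The two ends of $T = \reals$ are $E_+$ and $E_-$, in the directions of increasing and decreasing $\chi$ respectively; under the identification of the Bieri--Neumann--Strebel--Renz invariants with the Bieri--Geoghegan invariants of the translation action on $\reals$ (the case $M=\reals$ treated in \cite{amsmemoir}), $\chi$ represents a point of $\Sigma^n(G)$ if and only if $E_+ \in \Sigma^n(\rho)$. By the count above, every collapsing pair of $q$ consists of two descending edges at a common vertex, whose common image in $T$ is the descending edge; hence every collapsing pair faces $E_-$, and none faces $E_+$. Theorem~\ref{theorem2} then yields $E_+ \in \Sigma^n(\rho)$, i.e.\ $\chi$ represents a point of $\Sigma^n(G)$. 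In the degenerate case where $\varphi$ is an automorphism we have $\tilde T = T$ and $q$ an isomorphism, so Theorem~\ref{theorem2} does not apply directly; but then $G = H \rtimes \integers$ with $\ker\chi \cong H$ of type $F_n$, and $\chi \in \Sigma^n(G)$ follows from the classical fact that a character whose kernel is of type $F_n$ lies in $\Sigma^n$.

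I expect the main obstacle to be bookkeeping the orientations: verifying that the branching of $\tilde T$, and hence every collapsing pair, points in the $E_-$ direction rather than $E_+$, and matching this against the sign convention relating $\Sigma^n(G)$ to $\Sigma^n(\rho)$, so that the ascending end $E_+$ --- and not $E_-$ --- is the one shown to lie in $\Sigma^n(\rho)$.
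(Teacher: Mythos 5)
Your proposal is correct and is essentially the argument the paper intends (the corollary is stated with the proof left to the reader as an immediate application of Theorem \ref{theorem2}): realize $\chi$ as the translation action on the line $T=\reals$, take $\tilde T$ to be the Bass--Serre tree of the ascending HNN-decomposition with $q$ the $\chi$-height map, observe that the unique ascending edge at each vertex forces every collapsing pair to face $E_-$, and conclude $E_+\in\Sigma^n(\rho)$. Your separate treatment of the degenerate case where $\varphi$ is onto (so $q$ is locally injective and Theorem \ref{theorem2} does not apply) is a point the paper glosses over, and your orientation/sign checks are exactly the right things to verify.
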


This corollary is not new \cite{BieriStrebel} \cite{meinert96}
\cite{meinert97}, but the approach is. For further
discussion on this result, see \cite{bieri-2008}.

\section{Controlled connectivity}
\label{controlledconnectivitysection}
In a CAT(0) space $M$ there is a notion of a 
{\em (visual) boundary} $\boundaryinf M$ obtained by taking equivalence classes
 of geodesic rays \cite[Ch. II.8]{bridsonhaefliger}. This
boundary carries a topology, called the cone topology, induced by the
topology on $M$.  We call points of $\boundaryinf M$ {\em end points}.
CAT(0) spaces are contractible, and the boundary of a proper CAT(0)
space is a compact space. 
Let $\tau$ be a geodesic ray in $M$.
Following \cite{amsmemoir}, we
define the {\em Busemann function} $\beta_\tau: M \rightarrow
\reals$ by
\[\beta_\tau(p) = \lim_{t\rightarrow \infty}( t -
d(\tau(t), p)).\]
For $r \in \reals$, the set 
$HB_r(\tau) = \inv{\beta_\tau}([r,\infty))$
is called a {\em horoball} around $E$. Horoballs in CAT(0) spaces are
contractible. We can view $HB_r(\tau)$ as the nested union of closed
balls $\cup_{k\geq \max\{0,r\}} \overline{B_{k-r}(\tau(k))}$.


\begin{definition}
Fix $n \in \naturals$. Let $G$ be a group having type $F_n$
and let $M$ be a proper CAT(0) space
admitting an isometric action $G \stackrel{\rho}{\curvearrowright} M$.
Choose an $n$-dimensional $(n-1)$-connected CW-complex $X^n$ on which
$X$ acts freely and cocompactly, and choose a continuous $G$-map 
$h : X^n \rightarrow M$. We call $h$ a {\em control map};
one can be found because the action by $G$ on $X^n$ is free and $M$
is contractible. Fix a geodesic ray $\tau$ representing $E \in
\boundaryinf M$. For a
horoball $HB_r(\tau)$ about $E$,  denote the largest subcomplex of
$X^n$ contained in $\inv{h}(HB_r(\tau))$ by $X_{(\tau,r)}$.
Finally, we need a notion of {\em lag function}: any $\lambda(r) > 0$ satisfying 
$r - \lambda(r) \rightarrow \infty$ as $r \rightarrow \infty$ is
called a lag.

We say $\rho$ is
 {\em controlled $(n-1)$-connected, or $CC^{n-1}$, over $E$} if for
all $r \in \reals$  and all $-1 \leq p \leq (n-1)$,
there exists a lag $\lambda$ such that every 
map $f: S^p \rightarrow
X_{(\tau, r)}$ extends to a map $\tilde f: B^{p+1} \rightarrow X_{(\tau,
r-\lambda(r))}$.\footnote{By convention $S^{-1} = \emptyset$, 
and $(-1)$-connected means ``non-empty''.}
\end{definition}

\begin{definition}
The Bieri-Geoghegan invariant $\Sigma^n(\rho)$ is the subset of
$\boundaryinf M$ consisting of all end points over which $\rho$ is
controlled $(n-1)$-connected.
\end{definition}

\subsection{Relationship to the BNSR invariant}


If $\rho$ fixes
an endpoint $E$, then the pair $(\rho,E)$ determines a homomorphism 
$\chi_{\rho,E}: G \rightarrow \reals$, and $E \in \Sigma^1(\rho)$
iff $\chi_{\rho,E}$ represents a point in $\Sigma^1(G)$ \cite[\S 10.6]{amsmemoir}.
In fact, we can obtain the classical BNSR invariant $\Sigma^n(G)$ as the
special case where $\rho$ is the action $G \curvearrowright G_{ab}
\otimes \reals$ \cite[Ch.~10, Example A]{amsmemoir}. This is an
action by translations on a finite dimensional real vector space, so
every end point is fixed, and $\boundaryinf(G_{ab} \otimes
\reals) \iso \textrm{Hom}(G,\reals)$.

The question of finding a single technique for calculating $\Sigma^1$
for arbitrary group actions on trees seems out of reach at this time. To see
this, consider an action $G \stackrel{\rho}{\curvearrowright} T$ by
translations, where $T$ is a simplicial line.  This corresponds to
a homomorphism $\chi: G \twoheadrightarrow \integers$, and
calculating $\Sigma^1(\rho)$ determines whether $\chi$ and $-\chi$
represent points of $\Sigma^1(G)$. 
However, it is known that $\kerm \chi$ is finitely generated if and only
if both do represent points of $\Sigma^1(G)$ \cite[Theorem B1]{BieriNeumannStrebel}.
Thus a method for calculating $\Sigma^1(\rho)$ even in the special case that
the tree is a simplicial line would enable us to determine whether or not
the kernel of an arbitrary homomorphism to $\integers$ is finitely generated.

\section{Proof the Main Theorem}
An automorphism $s$ of a tree $T$ having no fixed point is said to be
{\em hyperbolic}. For each such $s$, there
is a unique line $A_s$, called the {\em axis} of $s$, stable under 
the action of the subgroup $\langle s \rangle$, which acts on $A_s$ by 
translations.  If $e$ is an oriented edge of $T$, then $s$ is said to {\em act
coherently} on $e$ if $e$ and $se$ are consistently oriented (i.e., if
they point in the same direction  --- neither toward each other nor away from each other). 
For an automorphism $s$, if $e \neq se$, then $s$ acts coherently on
$e$ if and only if $s$ is hyperbolic and both $e$ and $se$
lie on the axis of $s$
\cite[Proposition 25]{serretrees}.

\begin{lemma}
\label{onepairmanypairslemma}
Let $T$ be a cocompact $G$-tree, and let $E \in \boundaryinf T$.
Then for any geodesic ray $\tau$ representing $E$, any $r \in \reals$,
and any oriented edge $e$ of $T$ oriented toward $E$, there exists 
an element of the $G$-orbit of $e$ which is oriented toward $E$ 
and does not lie in $HB_r(\tau)$.
\end{lemma}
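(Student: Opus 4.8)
The plan is to produce the required orbit element explicitly, by extracting from $\tau$ a hyperbolic element that fixes $E$ and then pushing $e$ away from the horoball along that element's axis. A purely coarse argument — using cocompactness to find a translate of the initial vertex of $e$ sitting at an arbitrarily negative value of $\beta_\tau$ — places some translate low in the tree but gives no control over whether that translate still points toward $E$; supplying this orientation control will be the main obstacle, and the device that overcomes it is an element fixing $E$.

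First I would manufacture the hyperbolic element. The ray $\tau$ traverses infinitely many oriented edges $\epsilon_i = (\tau(i), \tau(i+1))$, each pointing toward $E$. Cocompactness forces only finitely many $G$-orbits of oriented edges, so by pigeonhole two of them, say $\epsilon_i$ and $\epsilon_j$ with $i < j$, lie in a common orbit; choose $s \in G$ with $s\epsilon_i = \epsilon_j$. Since $\epsilon_i$ and $\epsilon_j$ point the same way along $\tau$ (both toward $E$) and are distinct, $s$ acts coherently on $\epsilon_i$, so by \cite[Proposition 25]{serretrees} the element $s$ is hyperbolic and $\epsilon_i, \epsilon_j$ both lie on its axis $A_s$. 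Then $A_s$ contains a subsegment of $\tau$, and as $s$ carries $\tau(i)$ to $\tau(j)$ with $j > i$, one end of $A_s$ is $E$ and $s$ translates toward $E$; in particular $s(E) = E$.

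The crucial step is orientation bookkeeping. Because $g$ is a tree isometry carrying the geodesic ray from the initial vertex of $e$ to $g^{-1}E$ onto the geodesic ray from the initial vertex of $ge$ to $E$, the edge $ge$ points toward $E$ if and only if $e$ points toward $g^{-1}E$. Applying this with $g = s^{-n}$ and using $s^{n}E = E$, we conclude that $s^{-n}e$ points toward $E$ for every $n \geq 0$. This is exactly the information the coarse-density approach cannot provide, and it follows here simply from the fact that $s$ fixes $E$.

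Finally I would verify escape from the horoball. Since $s$ translates toward $E$ along $A_s$, the inverse powers $s^{-n}$ drive points toward the opposite end $E^- \neq E$, so the value of $\beta_\tau$ on the initial vertex of $s^{-n}e$ tends to $-\infty$ (moving away from $E$ decreases $\beta_\tau$ linearly), and the terminal vertex has Busemann value larger by exactly $1$. Hence for all sufficiently large $n$ both vertices of $s^{-n}e$ have $\beta_\tau$-value below $r$, so $s^{-n}e$ is not contained in $HB_r(\tau)$, while by the previous paragraph it still points toward $E$. This $s^{-n}e$ is the desired element of the $G$-orbit of $e$. The extraction of $s$ and the Busemann estimate are routine; only the orientation equivalence requires genuine care, and it is precisely what closes the argument.
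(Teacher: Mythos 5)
Your overall strategy is the same as the paper's (pigeonhole on a ray toward $E$ to extract a hyperbolic element, then use its powers to push $e$ out of the horoball), but there is a genuine gap at the step you yourself flag as crucial. You claim that because $A_s$ contains the two edges $\epsilon_i,\epsilon_j$ of $\tau$, ``one end of $A_s$ is $E$\ldots in particular $s(E)=E$.'' This does not follow: $A_s$ contains the segment $\tau([i,j+1])$, but beyond $\tau(j+1)$ the axis is propagated by $s$-invariance (its forward end is $\lim_n s^n\tau(i)$), not by $\tau$, and these can diverge. Concretely, let $T$ be the Cayley tree of $F_2=\langle x,y\rangle$, let $E=xyx^\infty$ and $\tau=(1,x,xy,xyx,xyx^2,\dots)$. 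The edges $\epsilon_0=(1,x)$ and $\epsilon_2=(xy,xyx)$ are a legitimate pigeonhole pair, giving $s=xy$, whose axis has ends $(xy)^{\pm\infty}\neq E$; so $s(E)\neq E$. Your orientation bookkeeping is then applied to a false premise, and the conclusion really does fail: taking $e=(xyx,xyx^2)$, which points toward $E$, one computes $s^{-n}e=((xy)^{-(n-1)}x,\,(xy)^{-(n-1)}x^2)$, and the geodesic from $(xy)^{-(n-1)}x$ to $E$ begins by cancelling the terminal $x$, so $s^{-n}e$ points \emph{away} from $E$ for every $n\geq 1$. Thus no power of your $s$ produces the required edge.

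The paper's proof avoids this by running the pigeonhole on the ray of edges starting at $e$ itself and pointing toward $E$, obtaining $h$ with $he_1=e_2$ where $e\leq e_1<e_2$ along that ray. It never needs $h$ to fix $E$: it only needs that the edges $h^{-n}e_1$ of the axis lying below $e_1$ point toward $E$, which holds because the geodesic from such an edge to $E$ climbs back up the axis through $e_1$ and $e_2$ before it can diverge from the axis; the same then holds for $h^{-n}e$ since $e$ lies below $e_1$ on the geodesic to $E$. If you want to keep your formulation, you must either justify choosing a pigeonhole pair whose associated element genuinely fixes $E$ (which requires extra work and is not automatic), or, more simply, replace $\tau$ by the edge ray issuing from $e$ as the paper does; the Busemann estimate in your last paragraph is fine once the orientation issue is repaired.
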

\begin{proof}
The ray of oriented edges
beginning at $e$ and representing $E$, with all edges pointing toward
$E$, contains infinitely many edges. Because the action is
cocompact, the pigeon-hole principle ensures that
there must be edges $e_1$ and $e_2$ from this ray in the same $G$-orbit.
Hence, there is an $h \in G$ with $he_1 = e_2$. Because $e_1$ and $e_2$ are
consistently oriented, $h$ is hyperbolic. Let $v_1$ be the terminus of
$e_1$ (the vertex of $e_1$ where $\beta_\tau$ is maximized).  By choosing $k
\in \integers$ such that $(i)$ $|k| > \beta_\tau(v_1) - r$ and 
$(ii)$ $h^k$ moves $e_1$ away from $E$, we ensure that $h^ke_1$ is
oriented toward $E$ and does not lie in $HB_r(\tau)$. 
Thus $h^ke$ is the edge we seek.
\end{proof}

\begin{observation}
\label{liftingrayslemma}
For trees $\tilde T$ and $T$, let $q: \tilde T \rightarrow T$ be locally
surjective. If $\tau = (e_0, e_1, \dots)$ is a geodesic edge ray in
$T$ and $\tilde e_0$ is an edge of $\tilde T$ satisfying 
$q(\tilde e_0) = e_0$, then there exists a
lift $\tilde \tau$ of $\tau$ to $\tilde T$ having initial edge
$\tilde e_0$ and which is also a geodesic edge ray.
{\raggedright \qed}
\end{observation}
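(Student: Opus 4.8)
The plan is to construct the lift $\tilde\tau$ edge by edge using local surjectivity of $q$, and then to verify separately that the resulting edge ray is in fact geodesic (i.e., reduced, with no backtracking). We are given the initial edge $\tilde e_0$ with $q(\tilde e_0) = e_0$. Suppose inductively that we have lifted the initial segment $(e_0,\dots,e_{i})$ to a reduced edge path $(\tilde e_0,\dots,\tilde e_i)$ in $\tilde T$ with $q(\tilde e_j) = e_j$ for each $j\le i$, and such that consecutive lifted edges share the appropriate vertex. Let $\tilde v_i$ be the terminal vertex of $\tilde e_i$; then $q(\tilde v_i)$ is the initial vertex of $e_{i+1}$, since $q$ preserves adjacency and incidence and $e_{i+1}$ continues $\tau$. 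Because $q$ is locally surjective, the induced map on the star of $\tilde v_i$ surjects onto the star of $q(\tilde v_i)$, so there is an edge $\tilde e_{i+1}$ emanating from $\tilde v_i$ with $q(\tilde e_{i+1}) = e_{i+1}$. Iterating produces an infinite edge ray $\tilde\tau = (\tilde e_0,\tilde e_1,\dots)$ lifting $\tau$.

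Next I would argue that $\tilde\tau$ is a geodesic, which in a tree amounts to checking that it is reduced: no two consecutive edges $\tilde e_i, \tilde e_{i+1}$ form a backtrack (i.e., $\tilde e_{i+1}$ is not the reverse of $\tilde e_i$). This is where the argument has genuine content. The key observation is that $q$ is a morphism of trees, so it sends the reverse $\overline{\tilde e_i}$ of $\tilde e_i$ to the reverse $\overline{e_i}$ of $e_i$. If $\tilde e_{i+1}$ were equal to $\overline{\tilde e_i}$, then applying $q$ would give $e_{i+1} = \overline{e_i}$, contradicting the hypothesis that $\tau$ is a geodesic edge ray and hence reduced. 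Therefore no backtracking can occur in the lift, and $\tilde\tau$ is reduced.

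A reduced infinite edge path in a tree is automatically a geodesic ray, because in a tree there is a unique reduced path between any two vertices and it realizes the distance; consequently the initial segments of $\tilde\tau$ are the geodesic segments between their endpoints, and $\tilde\tau$ is a geodesic ray. This completes the construction. I expect the only subtle point to be the no-backtracking verification, but as sketched it follows cleanly from the fact that $q$ is a morphism of trees (preserving edges and their reverses) together with reducedness of $\tau$; the local surjectivity hypothesis is used solely to guarantee that a lifting edge with the correct $q$-image exists at each stage. No local injectivity or finiteness is needed, which is consistent with the generality of the statement.
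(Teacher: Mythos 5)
Your proof is correct and is exactly the standard argument the paper has in mind; the paper states this as an Observation with no written proof (it is marked \qed as immediate), so your edge-by-edge lifting via local surjectivity, plus the remark that $q$ preserves edge reversal so reducedness of $\tau$ forces reducedness of $\tilde\tau$, supplies precisely the details the paper omits. No gaps.
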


\begin{observation}
\label{gmapontoobs}
Given a nonempty connected $G$-graph $\Gamma$ and a minimal $G$-tree $T$, any
$G$-morphism $h: \Gamma \rightarrow T$ is surjective.
\end{observation}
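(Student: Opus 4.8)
The plan is to show that the image $h(\Gamma)$ is a nonempty, $G$-invariant subtree of $T$, and then to invoke minimality of $T$ to conclude that this image must be all of $T$.

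First I would check that $h(\Gamma)$ is a connected subgraph of $T$. Since $h$ is a morphism of trees, it sends vertices to vertices and edges to edges while preserving adjacency, so it carries any edge-path in $\Gamma$ to an edge-path in $T$. As $\Gamma$ is connected, any two vertices of $h(\Gamma)$ are joined by such an image path, whence $h(\Gamma)$ is connected. Because $\Gamma$ is nonempty, $h(\Gamma)$ contains at least one vertex and is therefore nonempty as well.

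Next I would observe that $h(\Gamma)$ is $G$-invariant. Indeed, for any $g \in G$ and any point $y = h(x) \in h(\Gamma)$, the $G$-equivariance of $h$ gives $g \cdot y = g \cdot h(x) = h(g \cdot x)$, and since $g \cdot x \in \Gamma$ we conclude $g \cdot y \in h(\Gamma)$. Thus $G$ carries $h(\Gamma)$ into itself.

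Finally, since $T$ is a tree and $h(\Gamma)$ is a connected subgraph of it, $h(\Gamma)$ can contain no circuits and is therefore a subtree of $T$. It is then a nonempty, $G$-invariant subtree, so by minimality of $T$ it cannot be proper; hence $h(\Gamma) = T$ and $h$ is surjective. There is essentially no obstacle here: the only point requiring any care is verifying that the image of a connected graph under a graph morphism is again connected, which is immediate from the preservation of adjacency.
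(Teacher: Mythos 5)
Your argument is correct and is exactly the standard reasoning the paper has in mind: the image is a nonempty, connected, $G$-invariant subtree, so minimality forces it to be all of $T$. The paper states this as an Observation with no written proof, treating precisely this argument as immediate, so there is nothing to compare beyond noting that you have filled in the intended details correctly.
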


\begin{proposition}
\label{pairfacesendlemma}
Let $T$ be a cocompact $G$-tree and let $\tilde T$ be a minimal
$G$-tree. Suppose $q: \tilde T \rightarrow T$ is a $G$-morphism
which is locally surjective.
If $E \in \boundaryinf T$ is such that there exists a collapsing pair
facing $E$, then $E$ does not lie in $\Sigma^1(\rho)$.
\end{proposition}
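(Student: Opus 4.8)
The plan is to exhibit a \emph{single} convenient control map for which the horoball preimages fail to be controlled $0$-connected over $E$; since being $CC^0$ over $E$ is a property of $\rho$ independent of the choice of control data, this suffices to conclude $E \notin \Sigma^1(\rho)$. The key device is to route the control map through $\tilde T$. Since $G$ is finitely generated, its Cayley graph $X$ (for a finite generating set) is a connected $1$-dimensional complex carrying a free cocompact $G$-action, and the orbit map based at a chosen $\tilde v_0 \in \tilde T$ extends to a $G$-equivariant map $\tilde h\colon X \to \tilde T$ sending each edge to an edge-path of uniformly bounded length $D$. Composing gives a legitimate control map $h = q \circ \tilde h\colon X \to T$ for $\rho$. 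With this choice, $X_{(\tau,r)}$ is exactly the largest subcomplex of $X$ carried by $\tilde h$ into the pulled-back superlevel set $\tilde H_r := (\beta_\tau \circ q)^{-1}([r,\infty)) \subseteq \tilde T$, because $h^{-1}(HB_r(\tau)) = \tilde h^{-1}(q^{-1}(HB_r(\tau))) = \tilde h^{-1}(\tilde H_r)$. So the entire question reduces to the connectivity of $\tilde h$-preimages of superlevel sets of $f := \beta_\tau \circ q$ on $\tilde T$.

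The geometric heart is that, unlike $\beta_\tau$ on $T$, the function $f$ need not be convex on $\tilde T$, and the collapsing pair forces a local minimum. Writing $\tilde v$ for the common initial vertex of $\tilde e_1,\tilde e_2$ and $\tilde w_1,\tilde w_2$ for their termini, both edges satisfy $q(\tilde e_i) = e$ with $e$ pointing toward $E$, so $f(\tilde w_i) = \beta_\tau(w) = \beta_\tau(v) + 1 > \beta_\tau(v) = f(\tilde v)$, where $w,v$ are the terminus and initial vertex of $e$. Thus $\tilde v$ is a strict local minimum of $f$ in both of the directions $\tilde e_1,\tilde e_2$. Moreover, by Observation \ref{liftingrayslemma} the ray $\tau$ (arranged to pass through $e$) lifts through $\tilde e_1$ and through $\tilde e_2$ to geodesic rays $\tilde\tau_1,\tilde\tau_2$ lying in the two distinct components of $\tilde T \setminus \{\tilde v\}$, along each of which $f(\tilde\tau_i(t)) = \beta_\tau(\tau(t)) \to \infty$. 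Hence for every level $r$ the set $\tilde H_r$ contains points arbitrarily deep on both branches, while the only path in the tree $\tilde T$ between the two branches must pass through $\tilde v$, where $f$ takes the \emph{fixed} value $c := \beta_\tau(v)$ independent of $r$.

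I would then transfer this separation to $X$. Because $\tilde T$ is a minimal $G$-tree and $G$ is finitely generated, the action $G \curvearrowright \tilde T$ is cocompact, so the $\tilde v_0$-orbit is coarsely dense. For each large $r$ I pick vertices $\tilde a_r$ on $\tilde\tau_1$ and $\tilde b_r$ on $\tilde\tau_2$ with $f \geq r + D$, and group elements $a_r,b_r$ with $\tilde h(a_r),\tilde h(b_r)$ within the density constant of $\tilde a_r,\tilde b_r$; this places $a_r,b_r \in X_{(\tau,r)}$, with their $\tilde h$-images lying deep in the two separate branches once $r$ exceeds $c$ by enough. Any edge-path in $X$ from $a_r$ to $b_r$ has $\tilde h$-image an edge-path joining the two components of $\tilde T \setminus \{\tilde v\}$, hence meeting $\tilde v$; therefore some vertex of the $X$-path maps within $D$ of $\tilde v$, to $f$-level at most $c + D$, so the path is not contained in $X_{(\tau,s)}$ for any $s > c + D$. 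Thus $a_r,b_r$ lie in distinct path components of $X_{(\tau,s)}$ whenever $s > c + D$. Finally, for any lag $\lambda$ one has $r - \lambda(r) \to \infty$, so for all large $r$ the level $s = r - \lambda(r)$ exceeds $c + D$ and the $0$-sphere $\{a_r,b_r\} \subseteq X_{(\tau,r)}$ does not bound in $X_{(\tau,r-\lambda(r))}$; the $CC^0$ extension condition fails for every lag, giving $E \notin \Sigma^1(\rho)$.

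The step I expect to be the main obstacle is the faithful transfer in the third paragraph: verifying that the clean tree-theoretic bottleneck in $\tilde T$ survives passage to the control space over $T$ with only a \emph{bounded additive} error, and in particular that the obstructing level stays pinned at the fixed constant $c$ rather than drifting upward with $r$ — this pinning is precisely what defeats \emph{every} lag simultaneously. Establishing that $h = q\circ\tilde h$ is an admissible control map (free cocompact action on $X$, and cocompactness of $G \curvearrowright \tilde T$ from minimality and finite generation) is routine but must be recorded, since it is what licenses computing $\Sigma^1(\rho)$ through the geometry of $\tilde T$ in the first place.
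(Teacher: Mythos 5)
Your proposal is correct in substance and shares the paper's core mechanism: route the control map through $\tilde T$ as $h = q\circ \tilde h$, lift the ray toward $E$ through both edges of a collapsing pair, and observe that the two lifts are separated in the tree $\tilde T$ by the single vertex $\tilde v$, whose level under $\beta_\tau\circ q$ is pinned at a constant. Where you genuinely diverge is in which parameter you send to infinity to defeat the lag. The paper fixes the level $r=0$ and, for each prospective lag value $\lambda$, invokes Lemma \ref{onepairmanypairslemma} (pigeonhole plus hyperbolic translation, using cocompactness of $T$) to replace the collapsing pair by one in its $G$-orbit lying outside $HB_{-\lambda}(\tau)$, so the bottleneck sits below level $-\lambda$; you instead fix one collapsing pair forever and let $r\to\infty$ until $r-\lambda(r)$ clears the fixed level $c+D$. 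Your route dispenses with Lemma \ref{onepairmanypairslemma} entirely, which is a real economy; the cost is that you refute the $CC^0$ condition only at large $r$ for each given lag, whereas the paper refutes it at a single $r$ for all lags at once. Under the intended definition (one lag function working for all $r$ --- the formulation that makes the requirement $r-\lambda(r)\to\infty$ meaningful), your negation is valid; but the definition as literally printed in \S\ref{controlledconnectivitysection} reads ``for all $r$ \dots\ there exists a lag,'' and to negate that reading one needs the paper's version, which a single fixed collapsing pair cannot supply --- so you should state explicitly which quantifier order you are negating. The remaining differences are cosmetic: you use the Cayley graph with a coarse orbit map and a density constant where the paper uses an exact $G$-morphism $\Gamma\to\tilde T$ together with Observation \ref{gmapontoobs}; your appeal to cocompactness of $G\curvearrowright\tilde T$ (from minimality plus finite generation) is a correct standard fact playing that same role; and the Lipschitz bookkeeping should use the orbit-density constant, not the edge-length bound $D$, when placing $a_r, b_r$ in $X_{(\tau,r)}$.
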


\begin{proof}
Let $\Gamma$ be a free cocompact $G$-graph, and
choose any $G$-morphism $h: \Gamma \rightarrow \tilde T$. Then the
composition $q \circ h$ is a suitable control map for determining
controlled connectivity over $E$.

Let $\tau: [0, \infty) \rightarrow T$ be a geodesic edge ray
representing $E$. We will show that for any lag $\lambda > 0$, there
exist points in the 
subgraph $\Gamma_{(\tau,0)}$ that cannot be connected via a path in
$\Gamma_{(\tau,-\lambda)}$.

By Lemma \ref{onepairmanypairslemma}, we can choose a collapsing pair
 $(\tilde e_1, \tilde e_2)$ facing $E$ but
whose image in $T$ does not lie in $HB_{-\lambda}(\tau)$. 
Let $\tilde v$ be the vertex shared by
$\tilde e_1$ and $\tilde e_2$, and let $v$ be its image in $T$. 
Let $\gamma$ be the geodesic ray
representing $E$ and emanating from $v$. By Observation \ref{liftingrayslemma}
there exist two distinct lifts $\tilde \gamma_i$,
$i=1,2$, of $\gamma$ to $\tilde T$, with $\tilde \gamma_i$ having initial
edge $\tilde e_i$. Because $\gamma$ and $\tau$
both represent $E$, they eventually merge, so that $\gamma$
intersects $HB_r(\tau)$ nontrivially for all $r \in \reals$.
 Hence, both $\tilde \gamma_1$
and $\tilde \gamma_2$ intersect
$\inv{q}(HB_r(\tau))$ for all $r$. 

By design, $\tilde \gamma_1 \cap \tilde \gamma_2 = \tilde v$, and
$\tilde \gamma_1 \cup \tilde \gamma_2$ is a line. By Observation
\ref{gmapontoobs}, $h$ is onto,
so that $\tilde \gamma_1 \cup \tilde \gamma_2$ lies in
the image of $h$. For $i = 1,2$, choose a vertex $\tilde y_i \in \tilde \gamma_i
\cap \inv{q}(HB_0(\tau))$, and choose $x_i \in
\inv{h}(\tilde y_i)$. Then both $x_i$ lie in $\Gamma_{(\tau, 0)}$, but any
path through $\Gamma_{(\tau,-\lambda)}$ joining $x_1$ to $x_2$ would be
mapped to a path in $\inv{q}(HB_{-\lambda}(\tau))$ joining $\tilde y_1$ to
$\tilde y_2$. Since $\tilde T$ is a tree, no such path exists. 
\end{proof}

\begin{lemma}
\label{invariantedgesetlemma}
Let $T$ be a minimal $G$-tree and let $\mathcal E$ be a nonempty
$G$-invariant set of oriented edges. Then there is no vertex $v$ in $T$
such that all edges of $\mathcal E$ are oriented away from $v$.
\end{lemma}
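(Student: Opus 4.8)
The plan is to argue by contradiction. Suppose some vertex $v$ has the property that every oriented edge of $\mathcal E$ is oriented away from $v$; that is, for each $e \in \mathcal E$ with initial vertex $a$ and terminal vertex $b$ we have $d(v,a) < d(v,b)$. The first step is to propagate this property across the entire orbit $Gv$. Since $G$ acts by isometries and $\mathcal E$ is $G$-invariant, for any $g \in G$ and any $e \in \mathcal E$ the edge $g^{-1}e$ again lies in $\mathcal E$, hence is oriented away from $v$; applying the isometry $g$ (which carries initial vertices to initial vertices) then shows that $e$ itself is oriented away from $gv$. Thus every edge of $\mathcal E$ points away from \emph{every} vertex of $Gv$ simultaneously.

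The second step brings in minimality. The convex hull of the orbit $Gv$ is a nonempty $G$-invariant subtree of $T$, so by minimality it must equal all of $T$. Consequently every edge of $T$ — in particular every edge of the nonempty set $\mathcal E$ — lies on a geodesic segment joining two points of the orbit $Gv$.

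The final step extracts a contradiction from a single edge. Fix $e_0 \in \mathcal E$ and choose $g_1, g_2 \in G$ with $e_0$ lying on the geodesic $[g_1 v, g_2 v]$; since $e_0$ is a genuine edge, these two endpoints are distinct. Translating by $g_1^{-1}$, the edge $e := g_1^{-1} e_0$ lies in $\mathcal E$ and sits on the geodesic $[v,w]$, where $w := g_1^{-1} g_2 v \in Gv$ and $w \neq v$. But an oriented edge lying on the geodesic from $v$ to $w$ can point away from at most one of its two endpoints: writing the geodesic as $v = u_0, u_1, \dots, u_k = w$, the edge $\{u_i, u_{i+1}\}$ oriented toward $w$ points away from $v$ but toward $w$, and the reverse orientation points away from $w$ but toward $v$. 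This contradicts the conclusion of the first step, namely that $e$ points away from both $v$ and $w$. Hence no such vertex $v$ exists.

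I expect the only real subtlety to lie in the orientation bookkeeping of this last step — carefully confirming that an edge on a geodesic cannot point away from both of its endpoints — together with the observation that the orbit $Gv$ is automatically nondegenerate: since the convex hull of $Gv$ equals the infinite tree $T$, the orbit cannot reduce to a single fixed vertex, so a nontrivial geodesic $[v,w]$ with $w \in Gv$ is indeed available.
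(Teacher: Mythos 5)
Your proof is correct, but it runs in the opposite direction from the paper's. The paper considers the set $S$ of \emph{all} vertices from which every edge of $\mathcal E$ points away, observes that $S$ spans a $G$-invariant subtree (it is an intersection of half-trees, one for each $e \in \mathcal E$) which is proper (the terminal vertex of any edge of $\mathcal E$ is excluded), and concludes from minimality that $S$ is empty --- applying minimality to the ``bad'' subtree itself. You instead apply minimality to the convex hull of the orbit $Gv$ of a hypothetical bad vertex: it must be all of $T$, so some edge of $\mathcal E$ lies on a geodesic between two orbit points, and your Step 1 (correctly) shows such an edge would have to point away from both endpoints, which is impossible. Both arguments are sound and elementary. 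The paper's version is shorter but silently assumes the convexity of $S$; yours trades that for the (equally standard, and also left implicit) fact that in a tree the convex hull of $Gv$ is the union of the geodesics $[gv,hv]$, plus an explicit orientation check. A small streamlining of your Step 3: since every edge already lies on some $[v,gv]$ and Step 1 shows $e_0$ points away from both $g_1v$ and $g_2v$ directly, the translation by $g_1^{-1}$ is unnecessary --- though harmless.
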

\begin{proof}
The full subtree of $T$ on the vertex subset
\[ \{ v \mid \text{each edge of } \mathcal E \text{ is oriented away
from from }v\} \]
is a proper $G$-invariant subtree. By minimality, this set must be
empty.
\end{proof}

\begin{corollary}
\label{verticesfacedcor}
Let $T$ be a cocompact $G$-tree and let $\tilde T$ be a minimal
$G$-tree. Suppose $q: \tilde T \rightarrow T$ is a $G$-morphism which is surjective but not
locally injective. Then every vertex of $T$ is faced by a collapsing
pair.
\end{corollary}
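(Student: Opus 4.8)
The plan is to work entirely inside the minimal tree $\tilde T$, packaging all collapsing data into a single $G$-invariant edge set and then transferring a facing statement down to $T$ by a shortest-distance argument that sidesteps the fact that $q$ need not send geodesics to geodesics.

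First I would introduce $\tilde{\mathcal E}$, the set of oriented edges $\tilde e$ of $\tilde T$ that occur in some collapsing pair: those $\tilde e$ for which there is a distinct oriented edge $\tilde e'$ with the same initial vertex and $q(\tilde e') = q(\tilde e)$. Since $q$ fails to be locally injective, there is a vertex whose star map is non-injective, so $\tilde{\mathcal E}$ is nonempty; and $G$-equivariance of $q$ makes $\tilde{\mathcal E}$ a $G$-invariant set of oriented edges. Because $\tilde T$ is minimal, Lemma \ref{invariantedgesetlemma} applies: no vertex of $\tilde T$ has every edge of $\tilde{\mathcal E}$ oriented away from it, so each vertex has some edge of $\tilde{\mathcal E}$ pointing toward it.

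Now fix a vertex $w$ of $T$. Using surjectivity of $q$, choose a vertex $\tilde w$ with $q(\tilde w) = w$. By the previous paragraph some edge of $\tilde{\mathcal E}$ points toward $\tilde w$; among all such edges I would select one, say $\tilde e$ with initial vertex $\tilde v$, whose initial vertex is as close to $\tilde w$ as possible. The edge $\tilde e$ sits in a collapsing pair at $\tilde v$, and its image $e = q(\tilde e)$ is an oriented edge of $T$ issuing from $q(\tilde v)$. The goal is to show this pair faces $w$, i.e. that the geodesic from $q(\tilde v)$ to $w$ in $T$ begins with $e$.

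The main obstacle is precisely that $q$ is only a morphism, so the image of the geodesic $[\tilde v, \tilde w]$ is a walk that may backtrack, and its first edge $e$ could be cancelled in reduction. I would rule this out using the minimality of $d(\tilde v, \tilde w)$: if the image walk folded at its very first step, then the second edge of the geodesic $[\tilde v, \tilde w]$ and the reverse of $\tilde e$ would share an initial vertex (the terminus of $\tilde e$) and have equal $q$-image, forming a new collapsing pair whose distinguished edge again points toward $\tilde w$ but has initial vertex one step closer to $\tilde w$, contradicting the choice of $\tilde e$. Hence no such cancellation occurs, $e$ is genuinely the first edge of the geodesic from $q(\tilde v)$ to $w$, and the collapsing pair at $\tilde v$ faces $w$. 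As $w$ was arbitrary, every vertex of $T$ is faced, completing the argument. I would note that this approach uses minimality of $\tilde T$ (through Lemma \ref{invariantedgesetlemma}) together with surjectivity and non-local-injectivity of $q$, but never requires $T$ itself to be minimal.
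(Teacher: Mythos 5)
Your strategy coincides with the paper's: form the $G$-invariant set $\tilde{\mathcal E}$ of oriented edges belonging to collapsing pairs, note it is nonempty by non-local-injectivity, apply Lemma \ref{invariantedgesetlemma} to get an edge of $\tilde{\mathcal E}$ pointing toward any prescribed vertex of $\tilde T$, and then deal with the fact that $q$ may fold the connecting path. The paper finishes by extracting a new collapsing pair at the backtracking point of the image path closest to $v$; you instead choose $\tilde e$ extremally, minimizing $d(\tilde v,\tilde w)$, and argue that the original pair already faces $w$. These are the same idea packaged differently, and your extremal formulation is a reasonable way to make the paper's terse last step precise.

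There is, however, a gap in your final inference. You correctly identify the obstacle --- the first edge $e$ of the image walk could be cancelled in reduction --- but you only rule out a fold at the \emph{very first} step, and then conclude ``hence no such cancellation occurs.'' That does not follow: if the image walk reads $e, f, \bar f, \bar e, \dots$ (with $f \neq \bar e$), there is no fold at the first step, yet $e$ is cancelled once the inner pair $f\bar f$ collapses. The repair is immediate and uses exactly your mechanism, just applied at every position rather than only position zero: if the image of the geodesic $[\tilde v, \tilde w]$ folds at any position $i$, then the reverse of the $i$-th edge and the $(i+1)$-st edge of that geodesic share an initial vertex and have equal $q$-image, giving a collapsing pair whose $\tilde w$-pointing edge has initial vertex strictly closer to $\tilde w$ --- contradicting minimality. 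Hence the entire image walk is reduced, so it equals the geodesic from $q(\tilde v)$ to $w$ and begins with $e$, as you want. With that one adjustment your proof is complete.
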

\begin{proof}
Let $\tilde{\mathcal E}$ be the set of oriented edges of $\tilde T$
that are part of a collapsing pair. This is a $G$-invariant set, and it is
nonempty because $q$ is not locally injective. By
Lemma \ref{invariantedgesetlemma} each vertex $\tilde v$ of $\tilde T$
must therefore have an edge $\tilde e$ in $\tilde{\mathcal E}$
oriented toward $\tilde v$. Set $v = q(\tilde v)$. Then
if  $q(\tilde e)$ is not oriented toward $v$, the image of
the path from $\tilde e$ to $\tilde v$ must contain points of
backtracking. The point of backtracking closest to $v$ gives
rise to a collapsing pair facing $v$.
Because $q$ is surjective, all vertices of $T$ are of this form.
\end{proof}

\begin{observation} 
\label{sameendsobs}
If a cocompact $G$-tree $T$ has a nonempty $G$-invariant
subtree $T'$, then $T$ is a Hausdorff neighborhood of $T'$. Hence, $T$
and $T'$ have the same set of end points.
\end{observation}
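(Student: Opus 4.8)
The plan is to prove the two assertions in sequence: first that $T \subseteq N_R(T')$ for some finite $R$ (i.e.\ finite Hausdorff distance), and then to deduce the equality of end sets from this. For the Hausdorff-neighborhood part I would exploit cocompactness together with the $G$-invariance of $T'$. Since the action is cocompact there are only finitely many $G$-orbits of vertices; choose representatives $v_1,\dots,v_k$. As $T'$ is nonempty and $T$ is connected, each distance $R_i := d(v_i,T')$ is finite. The key observation is that $w \mapsto d(w,T')$ is constant on $G$-orbits: for $g \in G$, since $g$ acts by isometries and $gT' = T'$, we have $d(gv_i,T') = d(gv_i,gT') = d(v_i,T') = R_i$. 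Hence every vertex lies within $R := \max_i R_i$ of $T'$, and since every point of $T$ is within $\tfrac12$ of a vertex, $T \subseteq N_{R+1/2}(T')$. As $T' \subseteq T$ trivially, the Hausdorff distance between $T$ and $T'$ is at most $R+\tfrac12$, so $T$ is a Hausdorff neighborhood of $T'$.

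For the equality of end sets, note first that because $T'$ is a convex subtree, a geodesic ray in $T'$ stays geodesic in $T$, and the inclusion $T' \hookrightarrow T$ identifies $\boundaryinf T'$ with a subset of $\boundaryinf T$; it remains to show this subset is everything. Fix a vertex $v_0 \in T'$ and let $E \in \boundaryinf T$ be represented by the unique geodesic ray $\tau$ from $v_0$ to $E$. Since $\tau([0,\infty))$ and $T'$ are both convex, their intersection is convex, hence of the form $\{\tau(s) : 0 \le s \le b\}$ for some $b \in [0,\infty]$ (using $\tau(0) = v_0 \in T'$ and that $T'$ is closed). If $b = \infty$ then $\tau$ lies in $T'$ and $E \in \boundaryinf T'$. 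Otherwise, for every $s > b$ the nearest-point projection of $\tau(s)$ onto $T'$ equals $\tau(b)$ --- because in a tree the geodesic from $\tau(s)$ to any point of $T'$ passes through $\tau(b)$ --- so $d(\tau(s),T') = s - b \to \infty$, contradicting the bound $d(\tau(s),T') \le R + \tfrac12$ from the first part. Hence $b = \infty$, $\tau \subseteq T'$, and $E \in \boundaryinf T'$, giving $\boundaryinf T = \boundaryinf T'$.

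The routine part is the orbit counting in the first step. The part requiring care is the second, specifically the claim that once a geodesic ray leaves the convex subtree $T'$ its distance to $T'$ grows without bound; this is exactly what rules out an end of $T$ not carried by $T'$. Here the tree structure does the real work: uniqueness of geodesics and convexity of subtrees yield the explicit description of the nearest-point projection onto $T'$, from which the escape to infinity follows. I expect the crux to be verifying both that $\tau([0,\infty)) \cap T'$ is a single interval and that, past the point where $\tau$ leaves $T'$, the projection onto $T'$ remains constant at $\tau(b)$; once these are in hand, the contradiction with the bounded Hausdorff distance from Step~1 closes the argument.
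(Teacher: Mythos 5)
Your proof is correct, and since the paper states this as an Observation with no proof supplied, your argument is exactly the standard one the author is implicitly invoking: cocompactness plus $G$-invariance of $T'$ gives a uniform bound on $d(\cdot,T')$, and a geodesic ray that leaves the convex subtree $T'$ has distance to $T'$ growing linearly, which that bound forbids. Both steps are sound as written.
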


\begin{proposition}
\label{onlyoneendlemma}
Let $T$ be a cocompact $G$-tree and let $\tilde T$ be a minimal $G$-tree. 
Suppose $q: \tilde T \rightarrow T$ is a $G$-morphism which is not locally injective. 
Then there exists at most one point $E_0 \in \boundaryinf T$ such that no 
collapsing pairs face $E_0$.   
\end{proposition}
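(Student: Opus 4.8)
The plan is to phrase everything in terms of the $G$-invariant set $\mathcal E$ of oriented edges of $T$ arising as images $q(\tilde e_1)=q(\tilde e_2)$ of collapsing pairs. Since $q$ is not locally injective, $\mathcal E$ is nonempty, and by definition an end $E$ is faced by a collapsing pair precisely when some $e\in\mathcal E$ points toward $E$. So the statement is equivalent to showing that the set of ends $E$ toward which \emph{no} edge of $\mathcal E$ points contains at most one point.

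First I would reduce to the case that $q$ is surjective. The image $T'=q(\tilde T)$ is a nonempty connected $G$-invariant subgraph, hence a subtree, and as an invariant subtree of the cocompact $G$-tree $T$ it is itself cocompact (its quotient embeds in the finite graph $G\backslash T$). By Observation~\ref{sameendsobs}, $T$ is a Hausdorff neighborhood of $T'$, so $\boundaryinf T=\boundaryinf T'$; moreover every $e\in\mathcal E$ already lies in $T'$, and since subtrees are convex, $e$ points toward a given end in $T$ exactly when it does so in $T'$. Thus the facing relation is unchanged, and $q\colon\tilde T\to T'$ is a surjective, non-locally-injective $G$-morphism onto a cocompact $G$-tree with $\tilde T$ minimal. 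Corollary~\ref{verticesfacedcor} therefore applies and tells me that every vertex of $T'$ is faced by a collapsing pair.

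Now suppose, for contradiction, that there are two distinct ends $E_1\neq E_2$, neither faced by a collapsing pair, and let $L$ be the bi-infinite geodesic line in $T'$ joining them. Choose any vertex $v$ on $L$. By the previous paragraph there is an edge $e\in\mathcal E$ pointing toward $v$; equivalently, writing $T_e^{+}$ for the half-tree ``beyond'' $e$ (the component of $T'$ minus the open edge $e$ that contains the terminus of $e$), we have $v\in T_e^{+}$, and $e$ points toward an end $E$ exactly when $E\in\boundaryinf T_e^{+}$. The key observation is that $e$ must then point toward $E_1$ or toward $E_2$: because $L$ is a geodesic it crosses the geometric edge $e$ at most once, so $L\cap T_e^{+}$ is either all of $L$ or a sub-ray of $L$; since it contains $v$ it is in any case unbounded, hence contains a ray to $E_1$ or to $E_2$, placing that end in $\boundaryinf T_e^{+}$. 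But then that end is faced by the collapsing pair mapping to $e$, contradicting the choice of $E_1$ and $E_2$. Hence at most one end is unfaced.

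I expect the only genuinely delicate point to be the reduction in the second paragraph --- checking that passing to $T'=q(\tilde T)$ preserves both the set of ends and the facing relation, so that Corollary~\ref{verticesfacedcor} is legitimately available even though $q$ is not assumed surjective. The geometric heart of the argument, that an edge pointing toward a point of $L$ must point toward one of the two ends of $L$, is then immediate from the no-backtracking property of geodesics in a tree.
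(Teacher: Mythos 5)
Your proof is correct and follows essentially the same route as the paper: reduce to the surjective case via Observation~\ref{sameendsobs}, invoke Corollary~\ref{verticesfacedcor} to see that every vertex is faced, and then argue along the line between two putatively unfaced ends. You simply spell out more explicitly the step the paper leaves as a one-line assertion (that an edge pointing toward a vertex of the line must point toward one of its two ends), which is a fine and correct elaboration.
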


\begin{proof}
By Observation \ref{sameendsobs}, the ends of $T$ and the ends of
$q(\tilde T)$ are the same, so we may assume $q$ is surjective. 
By Corollary \ref{verticesfacedcor}, each vertex of $T$ is faced by a
collapsing pair in $\tilde T$. If two points of $\boundaryinf T$ were
not faced by a collapsing pair, then no vertex on the line between
them would be faced by a collapsing pair. Hence, there can be at most
one point of $\boundaryinf T$ not faced by a collapsing pair. 
\end{proof}

This proposition has an interesting consequence. If such an end $E_0$
exists, it must clearly be fixed by $\rho$. Yet points of the
boundary which are fixed by $\rho$ correspond to homomorphisms $G
\rightarrow \reals$, and such an end point lies in $\Sigma^n(\rho)$ if
and only if the corresponding homomorphism lies in the BNSR invariant
$\Sigma^n(G)$, as discussed in \S2.  Since we only consider simplicial trees, such 
points in fact correspond to homomorphisms $G \twoheadrightarrow \integers$. This leads
to the following corollary:

\begin{corollary}
\label{correspondingcharactercor}
Under the conditions of Proposition \ref{onlyoneendlemma},
if an end point $E_0 \in \boundaryinf T$ is faced by no collapsing
pair in $\tilde T$, then there exists a canonically associated discrete character
$\chi: G \rightarrow \integers$ such that $E_0 \in \Sigma^n(\rho)$ if
and only if $[\chi] \in \Sigma^n(G)$, the BNSR invariant. {\raggedright\qed}
\end{corollary}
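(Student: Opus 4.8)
The plan is to assemble the statement from facts already in hand: that $E_0$ is canonically determined and hence $\rho$-fixed, and that a $\rho$-fixed end of a simplicial tree corresponds to a discrete character under which the Bieri--Geoghegan invariant over $E_0$ translates into the BNSR invariant of that character.

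First I would show that $E_0$ is fixed by $\rho$. The set of oriented edges belonging to a collapsing pair is $G$-invariant (as noted in the proof of Corollary \ref{verticesfacedcor}), so the property ``$E$ is faced by a collapsing pair'' is preserved by the induced action of $G$ on $\boundaryinf T$: if $(\tilde e_1, \tilde e_2)$ faces $E$, then the collapsing pair $(g\tilde e_1, g\tilde e_2)$ faces $gE$. Consequently the set of ends faced by no collapsing pair is $G$-invariant, and by Proposition \ref{onlyoneendlemma} this set is the singleton $\{E_0\}$. Hence $gE_0 = E_0$ for every $g \in G$, i.e.\ $\rho$ fixes $E_0$.

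Next, since $\rho$ fixes $E_0$, the pair $(\rho, E_0)$ determines a character $\chi_{\rho,E_0}: G \to \reals$ measuring Busemann displacement toward $E_0$, exactly as recalled in \S\ref{controlledconnectivitysection} and \cite[\S10.6]{amsmemoir}. Because $T$ is a simplicial tree with edges of length $1$ and $\rho$ acts by simplicial automorphisms, $\beta_\tau$ changes by an integer between adjacent vertices, so $\chi_{\rho,E_0}$ takes values in $\integers$; its image is therefore a cyclic subgroup, and rescaling by a positive generator yields the canonically associated discrete character $\chi: G \twoheadrightarrow \integers$. I would also record that $\chi$ is nontrivial, so that $[\chi]$ is a genuine point of the character sphere $S(G)$: since $\tilde T$ is minimal and infinite it contains a hyperbolic element, and via Observation \ref{sameendsobs} the action on the subtree $q(\tilde T)$ (which shares the ends of $T$) has a hyperbolic element with $E_0$ as an axis endpoint, on which $\chi$ cannot vanish.

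Finally, the equivalence $E_0 \in \Sigma^n(\rho) \iff [\chi] \in \Sigma^n(G)$ is the translation, over a fixed end, between the Bieri--Geoghegan invariant and the BNSR invariant of the associated character; it is stated in \S\ref{controlledconnectivitysection} for $\Sigma^1$ and holds for all $n$ by \cite[\S10.6]{amsmemoir}. As $[\chi] = [\chi_{\rho,E_0}]$ determines the same point of $S(G)$, this identification closes the argument. The main point requiring care is the third step: verifying that simpliciality forces integrality (so that the character is genuinely \emph{discrete}) and that the action translates nondegenerately toward $E_0$ (so that $[\chi]$ is well defined), together with confirming that the fixed-end/BNSR dictionary, stated in the excerpt only for $\Sigma^1$, indeed applies verbatim to every $\Sigma^n$.
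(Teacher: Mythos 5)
Your proposal matches the paper's (implicit) argument exactly: the paper justifies this corollary via the preceding paragraph --- $E_0$ is necessarily fixed by $\rho$ since the set of ends faced by no collapsing pair is $G$-invariant and at most a singleton, a fixed end of a simplicial tree yields a discrete character $\chi_{\rho,E_0}: G \to \integers$, and membership of $E_0$ in $\Sigma^n(\rho)$ translates to $[\chi] \in \Sigma^n(G)$ by \cite[\S 10.6]{amsmemoir} --- and you supply the same three steps with additional care about discreteness and nondegeneracy. The one wobble is your nontriviality argument, which detours through a hyperbolic element on $\tilde T$ (whose image under $q$ need not act hyperbolically on $T$); it is cleaner to note that the cocompact action on the infinite locally finite tree $T$ itself must contain a hyperbolic element, whose axis then ends at the globally fixed $E_0$.
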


\begin{proof}[Proof of Main Theorem]
Because $q$ is not locally injective, Proposition
\ref{onlyoneendlemma} ensures there is at most one end point
faced by a collapsing pair. Because $q$ is locally surjective, 
Proposition \ref{pairfacesendlemma} ensures that every end point faced
by a collapsing pair lies outside $\Sigma^1(\rho)$. 
\end{proof}

\subsection{The case where stabilizers under $\tilde \rho$ have type
$F_n$}
Recall the ``topological construction
of the Bass-Serre tree'', discussed in \S6.2 of \cite{geogheganbook}, 
and in \cite{scottandwall}: the action 
$\tilde \rho$ corresponds to a graph of groups decomposition of $G$.
From this we can build a $K(G,1)$ $X$ admitting the quotient $G\backslash
\tilde T$ as a retract.  Let $p: \tilde X \twoheadrightarrow X$ be the
universal covering projection. 
There is a natural $G$-map $h: \tilde X
{\twoheadrightarrow} \tilde T$, and it is clear from the
construction of $h$ that for any
connected subset $A \subseteq \tilde T$, $\inv{h}(A) \subseteq \tilde X$ is
contractible. If for an integer $n \geq 1$ all point stabilizers under
$\tilde \rho$ have type $F_n$, then we can
take $X$ to have compact $n$-skeleton.
Hence, letting $\Gamma$ be the $n$-skeleton of $\tilde X$,
the composition $\bar h = q \circ h|_\Gamma: \Gamma \rightarrow T$ is an appropriate control map for
$\rho$.

\begin{definition}While the map $q$ does not induce a map $\boundaryinf \tilde T
\rightarrow \boundaryinf T$, each geodesic ray in $T$ can be lifted
to one or more geodesic rays in $\tilde T$ (see Observation
\ref{liftingrayslemma}) as long as $q$ is locally surjective. 
Hence, given $E \in
\boundaryinf T$, we can consider the set $\inv{q}(E) \subseteq
\boundaryinf \tilde T$ of end points represented by lifts of rays
representing $E$. 
\end{definition}

\begin{lemma} If $q$ is locally surjective, then $\inv{q}(E)$ is a singleton if and only
if there are no collapsing pairs facing $E$.
\end{lemma}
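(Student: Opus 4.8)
The plan is to prove both implications by contraposition, after first recording that $\inv{q}(E)$ is always nonempty: since local surjectivity forces $q$ to be surjective, the initial edge of any geodesic ray representing $E$ has a preimage edge, and Observation~\ref{liftingrayslemma} promotes this to a geodesic lift, whose end lies in $\inv{q}(E)$. Thus ``singleton'' is equivalent to ``not containing two distinct points,'' and it suffices to show that $\inv{q}(E)$ contains two distinct end points if and only if some collapsing pair faces $E$.

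For the direction that a collapsing pair yields a second lift, I would reuse the construction from the proof of Proposition~\ref{pairfacesendlemma}. Given a collapsing pair $(\tilde e_1, \tilde e_2)$ facing $E$ with common initial vertex $\tilde v$, let $e = q(\tilde e_1) = q(\tilde e_2)$ and let $\gamma$ be the geodesic ray from $q(\tilde v)$ to $E$, which begins with $e$. By Observation~\ref{liftingrayslemma}, $\gamma$ lifts to geodesic rays $\tilde\gamma_1, \tilde\gamma_2$ with initial edges $\tilde e_1, \tilde e_2$ respectively. Since $\tilde e_1 \neq \tilde e_2$ both emanate from $\tilde v$, these rays satisfy $\tilde\gamma_1 \cap \tilde\gamma_2 = \{\tilde v\}$, so they represent distinct points of $\boundaryinf \tilde T$, both lying in $\inv{q}(E)$.

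The substantive direction, which I expect to be the main obstacle, is showing that two distinct ends in $\inv{q}(E)$ force a collapsing pair facing $E$. The naive idea --- taking two lifts of a single ray and locating the first edge at which they diverge --- founders because distinct lifts need not share an initial vertex, so there may be no ``first'' point of divergence. Instead I would work with the bi-infinite geodesic $L$ in $\tilde T$ joining the two ends $\tilde E_1, \tilde E_2 \in \inv{q}(E)$. Write $L$ as a vertex sequence $(\tilde w_i)_{i\in\integers}$ with oriented edges $\tilde g_i$ from $\tilde w_i$ to $\tilde w_{i+1}$, and with $\tilde w_i \to \tilde E_1$ as $i \to +\infty$ and $\tilde w_i \to \tilde E_2$ as $i \to -\infty$. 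Each tail of $L$ shares a common tail with a genuine geodesic lift of a ray representing $E$ (such a lift exists by definition of $\inv{q}(E)$), so the images $v_i = q(\tilde w_i)$ satisfy $\beta(v_i) \to +\infty$ at both ends of the sequence, where $\beta$ is a Busemann function for $E$, which changes by exactly $\pm 1$ across each edge of $T$.

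Because $q$ sends edges to edges, $(v_i)$ is a genuine (possibly backtracking) edge path, and $\beta(v_i)$ moves by $\pm 1$ at each step while tending to $+\infty$ in both directions; it therefore attains a minimum at some index $i^*$. At such a minimum both neighbors $v_{i^*\pm 1}$ have larger $\beta$-value, so both edges from $v_{i^*}$ to $v_{i^*\pm 1}$ point toward $E$. Since a tree has a unique edge from any vertex toward a fixed end, this forces $v_{i^*-1} = v_{i^*+1}$ and hence $q(\tilde g_{i^*}) = \overline{q(\tilde g_{i^*-1})}$. The reversed edge $\overline{\tilde g_{i^*-1}}$ and the edge $\tilde g_{i^*}$ are then two distinct oriented edges issuing from $\tilde w_{i^*}$ (distinct because $L$ is geodesic, so $\tilde w_{i^*-1} \neq \tilde w_{i^*+1}$) with equal image under $q$ --- that is, a collapsing pair --- and their common image is oriented toward $E$, so the pair faces $E$. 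The Busemann-minimum step here is the crux that replaces the failed ``first divergence'' idea, and once it is in place the rest of the argument is bookkeeping.
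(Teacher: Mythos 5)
Your proof is correct. The easy direction (a collapsing pair facing $E$ produces two distinct lifts of the ray through its image edge, hence two points of $\inv{q}(E)$) matches the paper's argument, and your preliminary remark that $\inv{q}(E)$ is nonempty is a worthwhile point the paper leaves implicit. For the converse you take a genuinely different route. The paper fixes a single ray $\tau$ representing $E$, takes two distinct lifts, and splits into cases: if the lifts meet, the vertex where they diverge carries a collapsing pair; if they are disjoint, it passes to the bridge $P$ between them, notes that the image of $P$ is a finite subtree, and extracts a collapsing pair from a backtrack over a leaf of that subtree. You instead work with the bi-infinite geodesic $L$ joining the two ends and locate the backtrack by minimizing the Busemann function $\beta_\tau$ along the image edge path $q(L)$, using that $\beta_\tau$ changes by exactly $\pm 1$ across each edge of $T$ and tends to $+\infty$ along both tails of $q(L)$. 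Both arguments ultimately find a point where the image path doubles back, but yours is uniform (no case split), and the ``facing $E$'' conclusion is immediate at a Busemann minimum, since both outgoing edges there increase $\beta_\tau$ and a tree vertex has only one oriented edge pointing toward $E$; in the paper's disjoint case the reader must still verify that the collapsing pair sitting over a leaf actually faces $E$. The modest price is the extra bookkeeping needed to check that $\beta_\tau \circ q \to +\infty$ along both tails of $L$, which you correctly reduce to the fact that each tail of $L$ eventually coincides with a lift of a ray representing $E$.
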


\begin{proof}
Suppose that $\inv{q}(E)$ is not a singleton. Then for $\tau$
representing $E$, there exist two distinct lifts $\tilde \tau_1$ and
$\tilde \tau_2$, representing distinct points $\tilde E_1$ and
$\tilde E_2$ of $\boundaryinf \tilde T$. If these lifts are not
disjoint, then where they split (as they must, eventually) there is a
collapsing pair facing $E$. If they are disjoint, consider the geodesic
path $P$ through $\tilde T$ connecting them. The image of $P$ in $T$ is a
finite subtree of $T$. Choose any vertex $v \neq \tau(0)$ which is
a leaf of this subtree. This leaf and the corresponding edge lie
under a collapsing pair of edges of $P$ facing $E$. 

Now suppose there is a collapsing pair $(\tilde e_1, \tilde e_2)$ of edges of
$\tilde T$ facing $E$.  Let $e$ be their common image in $T$, and let
$\zeta$ be the geodesic ray in $T$ representing $E$ and beginning with
the edge $e$. Then there are distinct lifts $\tilde \zeta_1$ and
$\tilde \zeta_2$ of $\zeta$, each representing a distinct end point of
$\tilde T$. Hence $\inv{q}(E)$ is not a singleton.
\end{proof}

\begin{proof}[Proof of Theorem \ref{theorem2}]
If there is a collapsing pair facing $E$, then by 
Proposition \ref{pairfacesendlemma}, $E \nin \Sigma^1(\rho)$. 

If there is no collapsing pair facing $E$, we take the control map
$\bar h$ described above.
By construction of $\bar h$, we need only show that for any horoball
$HB_r(\tau)$ about $E$,  $\inv{q}(HB_r(\tau))$ is connected. 

For $i = 1,2$, let $\tilde z_i$ be a point in $\inv{q}(HB_r(\tau))$,
and let $z_i$ be its image in
$T$. We will find a path  between $\tilde z_1$ and $\tilde z_2$ lying in
$\inv{q}(HB_r(\tau))$. 

For $i=1,2$, there exists a unique geodesic ray $\zeta_i$ in $T$ which
emanates from $z_i$ and represents $E$. Let $\tilde \zeta_i$ be the lift of
$\zeta_i$ to $\tilde T$ emanating from $\tilde z_i$. Since $\zeta_i$
lies in $HB_r(\tau)$, $\tilde \zeta_i$ lies in $\inv{q}(HB_r(\tau))$.
Moreover, since $\inv{q}(E)$ is a singleton, $\tilde \zeta_1(\infty) = \tilde
\zeta_2(\infty)$. Hence, $\tilde \zeta_1$ and $\tilde \zeta_2$ must
eventually merge. The closure of 
$(\imm \tilde \zeta_1 \cup \imm \tilde \zeta_2) - (\imm \tilde
\zeta_1 \cap \imm \tilde \zeta_2)$
is the geodesic connecting $\tilde z_1$ to $\tilde z_2$.
\end{proof}

\bibliographystyle{plain}
\bibliography{paper7}

\end{document}